\long\def\delete#1{}
\newtheorem{theorem}{Theorem}[section]
\newtheorem{lemma}[theorem]{Lemma}
\newtheorem{corollary}[theorem]{Corollary}
\newtheorem{definition}[theorem]{Definition}
\newtheorem{example}[theorem]{Example}
\newcommand{\bmat}[1]{\begin{bmatrix}#1\end{bmatrix}}
\newcommand{\be}{\begin{equation}}
\newcommand{\ee}{\end{equation}}
\newcommand{\bea}{\begin{eqnarray}}
\newcommand{\eea}{\end{eqnarray}}
\newcommand{\bean}{\begin{eqnarray*}}
\newcommand{\eean}{\end{eqnarray*}}
\def\qed{\hfill$\Box$\vspace{11pt}}
\def\la{\langle}
\def\ra{\rangle}
\def\CCC{\mathbb{C}}
\def\ZZZ{\mathbb{Z}}
\def\QQQ{\mathbb{Q}}
\def\PP{{\cal P}}
\def\QQ{{\cal Q}}
\def\b0{{\bf 0}}
\def\bx{{\bf x}}
\def\Ga{\Gamma}
\def\Si{\Sigma}
\def\b{\beta}
\def\d{\delta}
\def\l{\lambda}
\def\vp{\varphi}
\def\Sym{{\rm Sym}}
\def\Inn{{\rm Inn}}
\def\Cay{{\rm Cay}}
\def\Aut{{\rm Aut}}
\def\SL{{\rm SL}}
\def\GF{{\rm GF}}
\def\det{{\rm det}}
\def\Ker{{\rm Ker}}
\title{Total perfect codes in Cayley graphs}
\author{Sanming Zhou \\
School of Mathematics and Statistics\\
The University of Melbourne\\
Parkville, VIC 3010, Australia\\
Email: smzhou@ms.unimelb.edu.au}
\begin{document}
\openup 0.4\jot
\maketitle


\begin{abstract}
A total perfect code in a graph $\Ga$ is a subset $C$ of $V(\Ga)$ such that every vertex of $\Ga$ is adjacent to exactly one vertex in $C$. We give necessary and sufficient conditions for a conjugation-closed subset of a group to be a total perfect code in a Cayley graph of the group. As an application we show that a Cayley graph on an elementary abelian $2$-group admits a total perfect code if and only if its degree is a power of $2$. We also obtain necessary conditions for a Cayley graph of a group with connection set closed under conjugation to admit a total perfect code.  

{\em Key words}: perfect code; total perfect code; efficient dominating set; efficient open dominating set; total perfect dominating set; Cayley graph

{\em AMS Subject Classification (2010)}: 05C25, 05C69, 94B99 
\end{abstract}

\section{Introduction}

Let $\Ga$ be a graph with vertex set $V(\Ga)$ and edge set $E(\Ga)$, and let $e \ge 1$ be an integer. The \emph{ball} with centre $v \in V(\Ga)$ and radius $e$ is the set of vertices of $\Ga$ with distance at most $e$ to $v$ in $\Ga$. A \emph{code} in $\Ga$ is simply a subset of $V(\Ga)$. A code $C \subseteq V(\Ga)$ is called a \emph{perfect $e$-code} \cite{K86} in $\Ga$ if the balls with centres in $C$ and radius $e$ form a partition of $V(\Ga)$, that is, every vertex of $\Ga$ is at distance no more than $e$ to exactly one vertex of $C$. A code $C$ is said to be a \emph{total perfect code} \cite{GHT} in $\Ga$ if every vertex of $\Ga$ has exactly one neighbour in $C$. In graph theory, the ball around $v$ with radius $e$ is also called the $e$-neighbourhood of $v$ in $\Ga$, a perfect $1$-code in a graph is called an \emph{efficient dominating set} \cite{DS03, KP12} or \emph{independent perfect dominating set} \cite{L01}, and a total perfect code is called an \emph{efficient open dominating set} \cite{HHS}. Similar to perfect codes, total perfect codes in graphs are fascinating objects of study \cite{HHS}. Moreover, they have potential applications in some practical domains, such as placement of Input/Output devices in a supercomputing network so that each element to be processed is at distance at most one to exactly one Input/Output device \cite{AD}. It is known \cite{GSS} that deciding whether a graph has a total perfect code is NP-complete.  

The notions above were evolved from the work in \cite{Biggs1}, which in turn has a root in coding theory. In the classical setting, a $q$-ary \emph{code} is a subset $C \subseteq S^n$, where $S$ is a nonempty finite set (the alphabet) of size $q$ and $S^n$ the set of $n$-tuples (words of length $n$) from $S$. A code $C$ is a \emph{perfect $e$-code} if every word in $S^n$ is at distance no more than $e$ to exactly one codeword of $C$, where the (Hamming) distance between two words is the number of positions in which they differ. In the case when $S$ is a finite field $\GF(q)$, any subspace of the linear space $\GF(q)^n$ is called a \emph{linear code}. Such a linear code can be expressed as $\{\bx \in \GF(q)^n: \bx M^T = 0\}$, where $\bx$ is treated as a row vector and $M$ is a matrix over $\GF(q)$ called the \emph{parity check matrix} of the code. See \cite{Heden1, vL} for surveys on perfect codes and related definitions in the classical setting. 

In \cite{Biggs1}, Biggs showed that the proper setting for the perfect code problem is the class of distance-transitive graphs. In fact, the $q$-ary perfect $e$-codes of length $n$ in the classical setting are precisely the perfect $e$-codes in Hamming graph $H(n, q)$, which is distance-transitive and is defined to have $q$-ary words of length $n$ as vertices and edges joining pairs of words of Hamming distance one. Perfect codes in distance-transitive graphs were studied in, for example, \cite{B, Biggs1, HS, Heden, Smith, Smith1, SE}. Since the fundamental work of Delsarte \cite{Del}, a great amount of work on perfect codes in distance-regular graphs and association schemes in general has been produced. Beginning with \cite{K86}, perfect codes in general graphs have also attracted considerable attention in the community of graph theory; see \cite{K, LN, LS, M, S, Z} for example. 

Perfect codes in Cayley graphs are especially charming objects of study. In \cite{MBG07} sufficient conditions for Gaussian and Eisenstein-Jacobi graphs to contain perfect $e$-codes were given; such graphs are certain Cayley graphs on quotients of the rings of Gaussian and Eisenstein-Jacobi integers, respectively. These conditions were proved to be necessary in \cite{Z15} in the more general setting of cyclotomic graphs. In \cite{MBG09} a certain Cayley graph on the integer quaternions right-modulo a fixed nonzero element was introduced and perfect 1-codes in it were constructed. In \cite{T04} it was proved that there is no perfect 1-code in any Cayley graph on $\SL(2, 2^f)$, $f > 1$ with respect to any connection set closed under conjugation. In \cite{DS03} a methodology for constructing E-chains of Cayley graphs was given and was used to construct infinite families of E-chains of Cayley graphs on symmetric groups, where an \emph{E-chain} is a countable family of nested graphs each containing a perfect 1-code. In \cite{E87} perfect 1-codes in a Cayley graph with connection set closed under conjugation were studied, yielding necessary conditions in terms of the irreducible characters of the underlying group. In \cite{L01} it was proved that a subset $C$ of a group $G$ closed under conjugation (or a normal subset as is called in \cite{L01}) is a perfect 1-code in a Cayley graph on $G$ if and only if there exists a covering from the Cayley graph to a complete graph such that $C$ is a fibre of the corresponding covering projection. Perfect 1-codes in \emph{circulants} (that is, Cayley graphs on cyclic groups) were studied in \cite{OPR07, KM13}.  

Total perfect codes have also attracted considerable attention in recent years. In \cite{AD} `lattice-like' total perfect codes in the lattice $\ZZZ^n$ were constructed, and the authors of this paper conjectured that these enumerate all possibilities of such codes. In \cite{D} total perfect codes in the lattice $\ZZZ^2$ and in the grid graphs on tori were studied. In \cite{GHT} it was proved that the tensor product of any number of simple graphs has a total perfect code if and only if each factor has a total perfect code. In \cite{CHKS, KG} the grid graphs that have total perfect codes were characterized. In \cite{KPY}, lexicographic, strong, and disjunctive products of graphs admitting total perfect codes were characterized, and a similar result was also obtained for the cartesian product of any graph with the complete graph of two vertices. Total perfect codes are also related to diameter perfect codes, a notion introduced in \cite{AAK01} for distance regular graphs and adapted in \cite{Etzion11} for Lee metric over $\mathbb{Z}^n$ and $\mathbb{Z}_q^n$. In fact, when the Manhattan (for $\mathbb{Z}^n$) or Lee (for $\mathbb{Z}_q^n$) distance is considered, total perfect codes coincide with diameter perfect codes of minimum distance four, in the sense that if $C$ is a diameter perfect code then $C \cup gC$ is a total perfect code.  

Inspired by \cite{L01} and \cite{E87}, in this paper we prove a few results on total perfect codes in Cayley graphs. We first give necessary and sufficient conditions for a subset of a finite group closed under conjugation to be a total perfect code in a Cayley graph of the group (see Theorem \ref{thm:pseudo}), akin to \cite[Theorem 2]{L01} for perfect 1-codes. As a key component for this result and its proof, we introduce the concept of pseudocovers of graphs (see Definition \ref{def:pseudo}). As an application we show that a Cayley graph on an elementary abelian $2$-group admits a total perfect code if and only if its degree is a power of $2$ (see Theorem \ref{thm:cube-gen}). This extends \cite[Theorem 9.2.3]{G} from hypercubes to all Cayley graphs on elementary abelian $2$-groups, but our proof technique is different from that in \cite{G}. In \textsection\ref{sec:nec} we give two necessary conditions (see Theorems \ref{thm:nec-a} and \ref{thm:nec-b}) for a Cayley graph with connection set a union of conjugacy classes to contain a total perfect code, by using the irreducible characters of the underlying group. These are parallel to \cite[Theorems 6-7]{E87} and their proofs are accomplished by using a similar approach.

\section{Preliminaries}
\label{sec:pre}

All graphs in the paper are undirected without loops and multi-edges, and all groups considered are finite. Group-theoretic notation and terminology used can be found in most textbooks on group theory; see \cite{Rotman} for an introduction to group theory and \cite{Feit} for the theory of characters of finite groups. We use $1$ to denote the identity element of the group under consideration. An {\em involution} in a group is an element of order two. We use $\Ga[X]$ to denote the subgraph of a graph $\Ga$ induced by a subset $X$ of $V(\Ga)$, and $\Ga(v)$ the neighbourhood of a vertex $v \in V(\Ga)$ in $\Ga$ (that is, the set of vertices adjacent to $v$ in $\Ga$).

The following observation follows from the definition of a total perfect code. 

\begin{lemma}
\label{lem:trivial}
Let $\Ga$ be a graph. 
A subset $C$ of $V(\Ga)$ is a total perfect code in $\Ga$ if and only if 
\begin{itemize}
\item[\rm (a)] $\Ga[C]$ is a matching; and
\item[\rm (b)] $\{\Ga(v) \setminus C: v \in C\}$ is a partition of $V(\Ga) \setminus C$.
\end{itemize}
In particular, any total perfect code in $\Ga$ must contain an even number of vertices. 
\end{lemma}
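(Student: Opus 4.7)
The plan is to verify both directions by unpacking the total-perfect-code condition on two disjoint groups of vertices: those in $C$ and those outside $C$. Each case corresponds exactly to one of the two listed conditions.

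First I would prove the forward direction. Assume $C$ is a total perfect code, so every $v\in V(\Ga)$ has a unique neighbour in $C$. Restricting this condition to $v\in C$ says that each vertex of $\Ga[C]$ has exactly one neighbour in $\Ga[C]$, meaning $\Ga[C]$ is a $1$-regular graph, i.e.\ a (perfect) matching on $C$; this gives (a). Restricting the condition to $v\in V(\Ga)\setminus C$ says that for each such $v$ there is exactly one $u\in C$ with $v\in \Ga(u)$, equivalently $v\in \Ga(u)\setminus C$; so each vertex outside $C$ lies in exactly one member of $\{\Ga(u)\setminus C:u\in C\}$, yielding (b).

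For the backward direction, assume (a) and (b) hold. For $v\in C$, condition (a) provides a unique neighbour of $v$ inside $C$ (its match), and since $\Ga$ has no loops there can be no contribution from $v$ itself, so $v$ has precisely one neighbour in $C$. For $v\in V(\Ga)\setminus C$, condition (b) places $v$ in $\Ga(u)\setminus C$ for a unique $u\in C$; equivalently $v$ has a unique neighbour in $C$. Combining both cases shows $C$ is a total perfect code.

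Finally, the parity statement is immediate from (a): a $1$-regular graph on $C$ exists only if $|C|$ is even, since such a graph decomposes into $|C|/2$ disjoint edges.

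The argument is entirely routine; there is no real obstacle, only the notational care of separating the two vertex classes and observing that the uniqueness clause in the definition of a total perfect code splits cleanly as (a) plus (b).
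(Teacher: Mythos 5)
Your proof is correct and is exactly the routine unpacking the paper has in mind: it states this lemma without proof as an immediate observation from the definition, and your case split into $v\in C$ (giving the perfect matching on $C$, hence the parity claim) and $v\in V(\Ga)\setminus C$ (giving the partition) is the intended argument in both directions.
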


A graph $\Si$ is called a {\em cover} of a graph $\Ga$ with \emph{covering projection} $p: \Si \rightarrow \Ga$ if there exists a surjective mapping $p: V(\Si) \rightarrow V(\Ga)$ such that for each $u \in V(\Si)$ the restriction of $p$ to $\Si(u)$ is a bijection from $\Si(u)$ to $\Ga(p(u))$. We call $\Si$ a {\em $k$-fold cover} of $\Ga$ if all \emph{fibres} $p^{-1}(v)$, $v \in V(\Ga)$ have size $k$. The following two lemmas are analogies of \cite[Lemmas 1-2]{L01}. 

\begin{lemma}
\label{lem:tri}
Let $\Ga$ be a $d$-regular graph, where $d \ge 1$.
\begin{itemize}
\item[\rm (a)] If $C$ is a total perfect code in $\Ga$, then 
\begin{equation}
\label{eq:div}
|C| = \frac{|V(\Ga)|}{d}.
\end{equation}
In particular, if $\Ga$ admits a total perfect code, then $d$ divides $|V(\Ga)|$ and $|V(\Ga)|/d$ is even. Thus any regular graph with an odd number of vertices does not admit total perfect codes. 
\item[\rm (b)] If $C_1, \ldots, C_n$ are pairwise disjoint total perfect codes in $\Ga$, then the subgraph of $\Ga$ with vertex set $\cup_{i=1}^n C_i$ and edges of $\Ga$ joining distinct such codes is a $c$-fold cover of $K_n$, where $c = |V(\Ga)|/d$.
\end{itemize}
\end{lemma}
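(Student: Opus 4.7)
The plan for part (a) is the standard double-count of incidences between $V(\Ga)$ and $C$. I will count pairs $(v,c) \in V(\Ga) \times C$ with $v$ adjacent to $c$ in two ways. By the definition of a total perfect code, each $v \in V(\Ga)$ contributes exactly one such pair, so the total count is $|V(\Ga)|$. On the other hand, $d$-regularity gives that each $c \in C$ contributes $d$ pairs, so the total count is $d|C|$. Equating the two yields the identity \eqref{eq:div} and in particular $d \div |V(\Ga)|$. The parity assertion is immediate: Lemma \ref{lem:trivial} forces $|C|$ to be even (since $\Ga[C]$ is a matching), and therefore $|V(\Ga)|/d$ is even. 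The corollary that a regular graph on an odd number of vertices cannot admit a total perfect code follows at once.

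For part (b), the plan is to exhibit an explicit covering projection. Define $p : \bigcup_{i=1}^n C_i \to V(K_n)$ by $p(v) = i$ whenever $v \in C_i$; this is well-defined because the codes $C_1,\dots,C_n$ are pairwise disjoint, and it is surjective because each $C_i$ is nonempty by part (a). Let $\Si$ be the subgraph described in the statement. The entire content of (b) then reduces to verifying that for each $v \in C_i$ the restriction of $p$ to $\Si(v)$ is a bijection onto $V(K_n) \setminus \{i\}$. Since $C_j$ is a total perfect code in $\Ga$, the vertex $v$ has exactly one neighbour in $C_j$ for every $j$, including $j=i$; when $j \ne i$ this neighbour lies in $\Si(v)$, and when $j=i$ it is deliberately excluded from $\Si$. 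Hence $|\Si(v)| = n-1$, with exactly one neighbour in each $C_j$ for $j \ne i$, which is precisely the covering condition. The fibre $p^{-1}(i) = C_i$ has cardinality $c = |V(\Ga)|/d$ by part (a), so $\Si$ is indeed a $c$-fold cover of $K_n$.

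I do not anticipate a serious obstacle: part (a) is a one-line incidence count, and part (b) is a routine verification once the natural projection $p$ is written down. The only point that requires a moment of care is making sure that the matching edges inside each $\Ga[C_i]$ guaranteed by Lemma \ref{lem:trivial}(a) are correctly excluded from $\Si$, so that each $v \in C_i$ sees exactly $n-1$ neighbours in $\Si$; this is built into the definition of $\Si$, which keeps only edges joining distinct $C_i$'s.
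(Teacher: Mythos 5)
Your proposal is correct and follows essentially the same route as the paper: part (a) is the same counting identity $d|C|=|V(\Ga)|$ combined with the evenness of $|C|$ from Lemma \ref{lem:trivial}, and part (b) uses the identical projection $p(v)=v_i$ for $v\in C_i$, with the covering condition verified via the fact that each vertex has exactly one neighbour in each code $C_j$. No gaps; your write-up merely spells out the local bijection check a little more explicitly than the paper does.
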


\begin{proof}
(a) By Lemma \ref{lem:trivial}, we have $d |C| = |V(\Ga)|$, yielding $|C| = |V(\Ga)|/d$. Thus $d$ is a divisor of $|V(\Ga)|$ and again by Lemma \ref{lem:trivial}, $|V(\Ga)|/d$ must be even.

(b) By (a), all codes $C_1, \ldots, C_n$ have size $c  = |V(\Ga)|/d$. By the definition of a total perfect code, the edges of $\Ga$ between distinct $C_i$ and $C_j$ form a matching of size $c$. The union of these matchings for all pairs $(i, j)$, $i \ne j$, is precisely the subgraph $\Si$ of $\Ga$ with vertex set $\cup_{i=1}^n C_i$ and edges of $\Ga$ between distinct codes $C_i$. Let $K_n$ be the complete graph with vertices $v_1, \ldots, v_n$. Define $p: \cup_{i=1}^n C_i \rightarrow V(K_n)$ such that $p(u) = v_i$ if and only if $u \in C_i$. Then $p$ is a covering projection from $\Si$ to $K_n$ so that $\Si$ is a $c$-fold cover of $K_n$. 
\qed
\end{proof}
 
\begin{lemma}
\label{lem:covering}
Let $p: \Si \rightarrow \Ga$ be a covering projection. If $C$ is a total perfect code in $\Ga$, then $p^{-1}(C)$ is a total perfect code in $\Si$.
\end{lemma}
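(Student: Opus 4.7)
The plan is to verify the defining property of a total perfect code directly, using the local bijection built into the definition of a covering projection.

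First I would fix an arbitrary vertex $u \in V(\Si)$ and set $v = p(u) \in V(\Ga)$. Since $C$ is a total perfect code in $\Ga$, the vertex $v$ has a unique neighbour $c \in C$. Because $p$ restricted to $\Si(u)$ is a bijection onto $\Ga(v)$, there is a unique $u' \in \Si(u)$ with $p(u') = c$, and by construction $u' \in p^{-1}(C)$. This gives at least one neighbour of $u$ in $p^{-1}(C)$.

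Next I would establish uniqueness. Suppose $u'' \in \Si(u) \cap p^{-1}(C)$. Then $p(u'')$ lies in $\Ga(v)$ (again because $p$ carries $\Si(u)$ into $\Ga(v)$) and also in $C$, so $p(u'') = c = p(u')$. Injectivity of $p|_{\Si(u)}$ forces $u'' = u'$. Hence every vertex of $\Si$ has exactly one neighbour in $p^{-1}(C)$, which is precisely the definition of a total perfect code.

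There is no genuine obstacle here; the argument is essentially a lift through the fibre-wise bijection, and it is the exact analogue for total perfect codes of the familiar fact that perfect $1$-codes lift along covering projections. The only point worth stating carefully is that the covering property is used twice for the same vertex $u$: once for existence (to find a preimage of $c$ in $\Si(u)$) and once for uniqueness (to conclude that two neighbours of $u$ with the same image under $p$ must coincide).
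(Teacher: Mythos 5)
Your proof is correct and rests on the same idea as the paper's: lifting the unique neighbour in $C$ through the fibrewise bijection $p|_{\Si(u)} \colon \Si(u) \to \Ga(p(u))$ and using its injectivity for uniqueness. The only difference is organizational — the paper splits the verification via Lemma \ref{lem:trivial} into a matching condition on $p^{-1}(C)$ and a domination condition on its complement, whereas you verify the defining property uniformly for every vertex of $\Si$, which is if anything slightly cleaner.
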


\begin{proof}
Since $C$ is a total perfect code in $\Ga$, by Lemma \ref{lem:trivial}, $\Ga[C]$ is a matching. Since $p$ is a covering projection, it follows that $\Si[p^{-1}(C)]$ is a matching. Similarly, since every vertex of $V(\Ga) \setminus C$ is adjacent to exactly one vertex of $C$, every vertex of $V(\Si) \setminus p^{-1}(C)$ is adjacent to at least one vertex of $p^{-1}(C)$. If a vertex $u \in V(\Si) \setminus p^{-1}(C)$ is adjacent to $v, v' \in p^{-1}(C)$ in $\Si$, then $p(u) \in V(\Ga) \setminus C$ is adjacent to $p(v), p(v') \in C$ in $\Ga$, which implies $p(v) = p(v')$ as  $C$ is a total perfect code in $\Ga$. Since $\Si$ is a cover of $\Ga$, we must have $v=v'$. Therefore, $p^{-1}(C)$ is a total perfect code in $\Si$.
\qed
\end{proof}

\begin{definition}
\label{def:pseudo}
{\em 
A graph $\Si$ is called a \emph{pseudocover} of a graph $\Ga$ if there exists a surjective mapping $p: V(\Si) \rightarrow V(\Ga)$ such that $\Si[p^{-1}(v)]$ is a matching for every $v \in V(\Ga)$ and $p$ is a covering projection from $\Si^*$ to $\Ga$, where $\Si^*$ is the graph obtained from $\Si$ by deleting the matching in each $\Si[p^{-1}(v)]$. We call $p$ a \emph{pseudocovering}, written $p: \Si \rightarrow \Ga$, and $p^{-1}(v)$, $v \in V(\Ga)$ the \emph{fibres} of $p$. 

A pseudocovering $p: \Si \rightarrow \Ga$ is called a \emph{$G$-pseudocovering} if $G$ is a subgroup of $\Aut(\Si)$ and there exists an isomorphism $h: \Ga \rightarrow \Si/\PP_G$ such that the quotient mapping $\Si \rightarrow \Si/\PP_G$ is the composition of $p$ and $h$, where $\PP_G$ is the partition of $V(\Si)$ into $G$-orbits and $\Si/\PP_G$ is the quotient graph of $\Si$ with respect to $\PP_G$.  
}
\end{definition}

The following lemma is the counterpart of \cite[Theorem 1]{L01} for total perfect codes. 

\begin{lemma}
\label{lem:pseudo}
Let $\Ga$ be a regular graph with $E(\Ga) \ne \emptyset$ and $n$ a positive integer. Then $\Ga$ is a pseudocover of $K_n$ if and only if $V(\Ga)$ admits a partition $\{C_1, \ldots, C_n\}$ such that each $C_i$ is a total perfect code in $\Ga$, $i=1, \ldots, n$.
\end{lemma}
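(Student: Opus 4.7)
The plan is to set up a direct correspondence between pseudocoverings $p : \Ga \to K_n$ and partitions $\{C_1, \ldots, C_n\}$ of $V(\Ga)$ into total perfect codes, via the fibre assignment $C_i = p^{-1}(v_i)$ where $V(K_n) = \{v_1, \ldots, v_n\}$. In each direction the proof will amount to unpacking the two relevant definitions side by side and checking that they match up under this assignment. The only subtlety, and the point I expect to be the main obstacle, is the reading of the word \emph{matching}: the pseudocover definition and clause (a) of Lemma \ref{lem:trivial} both use it to mean a $1$-regular spanning subgraph on the relevant vertex set, and this reading must be preserved consistently throughout.

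For the (only if) direction, starting from a pseudocovering I will verify that each $C_i$ satisfies both clauses of Lemma \ref{lem:trivial}. Clause (a) is immediate from the pseudocover definition. For clause (b), I will fix $u \in V(\Ga) \setminus C_i$, say $u \in C_j$ with $j \ne i$; the covering condition supplies a bijection $p|_{\Ga^*(u)} : \Ga^*(u) \to K_n(v_j) = V(K_n) \setminus \{v_j\}$, so exactly one neighbour of $u$ in $\Ga^*$ lies in $C_i$. Since the edges suppressed in passing from $\Ga$ to $\Ga^*$ all lie inside fibres, adjacencies between distinct fibres agree in $\Ga$ and $\Ga^*$, so $u$ has a unique neighbour in $C_i$ in $\Ga$ as well, which is clause (b).

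For the converse, I will build a pseudocovering from the partition by setting $p(u) = v_i$ iff $u \in C_i$. Surjectivity of $p$ follows since each code is non-empty (as $E(\Ga) \ne \emptyset$ together with the matching condition rules out an empty total perfect code on a non-empty vertex set), and the fibre matching required by the pseudocover definition is supplied by Lemma \ref{lem:trivial}(a). To check the covering projection property for $p : \Ga^* \to K_n$, I will fix $u \in C_i$ and use the total perfect code property of each $C_j$ to produce the unique neighbour $u_j \in C_j$ of $u$. Then $\Ga(u) = \{u_1, \ldots, u_n\}$, the matching edge at $u$ inside $C_i$ is precisely $\{u, u_i\}$, so $\Ga^*(u) = \{u_j : j \ne i\}$ is sent bijectively under $p$ onto $V(K_n) \setminus \{v_i\} = K_n(v_i)$, establishing the covering condition.
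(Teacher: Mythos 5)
Your proof is correct and takes essentially the same route as the paper: the same fibre--code correspondence $C_i = p^{-1}(v_i)$, with your converse direction being exactly the content the paper delegates to Lemma~\ref{lem:tri}(b) and your forward direction being the verification the paper states without detail. Your observation that ``matching'' must be read consistently as a $1$-regular (perfect) matching on each fibre is a sensible clarification of Definition~\ref{def:pseudo} and Lemma~\ref{lem:trivial}(a), but it does not alter the argument.
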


\begin{proof}
The sufficiency follows from Lemma \ref{lem:tri}(b) and Definition \ref{def:pseudo} immediately. 

Suppose $\Ga$ is a pseudocover of $K_n$ with a pseudocovering projection $p: V(\Ga) \rightarrow V(K_n)$. Then for each $v \in V(K_n)$ the fibre $p^{-1}(v)$ is a total perfect code in $\Ga$, and all such fibres form a partition of $V(\Ga)$ as required.  
\qed
\end{proof}

\section{Total perfect codes in Cayley graphs}
\label{sec:main}

Given a group $G$ and a subset $S$ of $G$ such that $1 \not \in S$ and $S = S^{-1} := \{g^{-1}: g \in S\}$, the \textit{Cayley graph} $\Cay(G, S)$ on $G$ with respect to the \emph{connection set} $S$ is defined to have vertex set $G$ such that $x, y \in G$ are adjacent if and only if $xy^{-1} \in S$. Obviously, $\Cay(G, S)$ is an undirected graph of degree $|S|$ without loops. 

\begin{lemma}
\label{lem:partition}
Suppose $C \subseteq G$ is a total perfect code in a Cayley graph $\Cay(G, S)$. Then
\begin{itemize}
\item[\rm (a)] for every $g \in S$, $Cg$ is a total perfect code in $\Cay(G, S)$;
\item[\rm (b)] $\{gC: g \in S\}$ is a partition of $G$;
\item[\rm (c)] if $S$ is closed under conjugation, then $\{Cg: g \in S\}$ is a partition of $G$.
\end{itemize}
\end{lemma}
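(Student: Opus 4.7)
For part (a), I would observe that right multiplication $\rho_h\colon x\mapsto xh$ is an automorphism of $\Cay(G,S)$ for every $h\in G$, since $(xh)(yh)^{-1}=xy^{-1}$ preserves the adjacency relation $xy^{-1}\in S$. Graph automorphisms map total perfect codes to total perfect codes, so $Ch=\rho_h(C)$ is a total perfect code for every $h\in G$, and in particular for $h=g\in S$; the restriction $g\in S$ is actually irrelevant to this part.

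For part (b), the key observation is that $x\in gC$ is equivalent to the existence of $c\in C$ with $x=gc$, i.e.\ $xc^{-1}=g$; combined with the hypothesis $g\in S$, this says precisely that $c$ is a neighbour of $x$ in $\Cay(G,S)$ and that $g=xc^{-1}$ is the associated group element. Since $C$ is a total perfect code, every $x\in G$ has exactly one neighbour $c\in C$, which in turn determines a unique $g\in S$; hence each $x\in G$ lies in exactly one $gC$ with $g\in S$. A cardinality check via Lemma~\ref{lem:tri}(a) gives $\sum_{g\in S}|gC|=|S|\cdot|C|=|G|$, confirming consistency with a partition.

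For part (c) the same bijective idea applies but with the side of multiplication reversed. Writing $x=cg$ with $c\in C$ forces $xc^{-1}=cgc^{-1}$, so the adjacency of $x$ and $c$ becomes the condition $cgc^{-1}\in S$; the conjugation-closure of $S$ is precisely what makes this automatic when $g\in S$. Conversely, if $c\in C$ is the unique neighbour of $x$ in $C$ then $s:=xc^{-1}\in S$ and $x=c(c^{-1}sc)$ with $c^{-1}sc\in S$ by conjugation-closure, exhibiting $x\in Cg$ for $g=c^{-1}sc$. The unique-neighbour property then yields uniqueness of $g$. The main (and only) subtlety in the whole lemma sits here: without conjugation-closure the elements $cgc^{-1}$ and $c^{-1}sc$ need not lie in $S$, and the bijection between neighbours of $x$ in $C$ and coset labels $g\in S$ with $x\in Cg$ collapses. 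Parts (a) and (b) are essentially formal consequences of the definitions of $\Cay(G,S)$ and of a total perfect code.
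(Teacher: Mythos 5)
Your proposal is correct and takes essentially the same route as the paper: right translation is an automorphism for (a), the unique-neighbour property of $C$ rules out an element lying in two distinct cosets in (b) and (c), and conjugation-closure enters (c) exactly through the identity $xc^{-1}=cgc^{-1}$. The only cosmetic difference is that you obtain the covering property of $\{gC\}$ and $\{Cg\}$ directly from each $x\in G$ having a (unique) neighbour in $C$, whereas the paper combines pairwise disjointness with the count $|C||S|=|G|$; both are fine.
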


\begin{proof}
Denote $\Ga = \Cay(G, S)$.

(a) It is clear that any automorphism of $\Ga$ leaves the set of total perfect codes in $\Ga$ invariant. In particular, since for every $g \in S$, $\hat{g}: x \mapsto xg,\ x \in G$ defines an automorphism of $\Ga$, $Cg$ must be a total perfect code in $\Ga$.

(b) Suppose $g_1 C \cap g_2 C \ne \emptyset$ for distinct $g_1, g_2 \in S$. Then there exist $x_1, x_2 \in C$ such that $g_1 x_1 = g_2 x_2$. Since $g_1 \ne g_2$, we have $x_1 \ne x_2$. Thus $g_1 x_1$ is adjacent to distinct vertices $x_1, x_2$ in $C$, contradicting the assumption that $C$ is a total perfect code in $\Ga$. Thus $g_1C \cap g_2C = \emptyset$ for distinct $g_1, g_2 \in S$. Moreover, since $\Ga$ is $|S|$-regular, we have $|C||S| = |G|$ by (\ref{eq:div}) and therefore $\{gC: g \in S\}$ is a partition of $G$.

(c) Similar to (b), it suffices to prove $C g_1 \cap C g_2 = \emptyset$ for distinct $g_1, g_2 \in S$. Suppose otherwise, say,  $x_1 g_1 = x_2 g_2$ for $x_1, x_2 \in C$. Since $g_1 \ne g_2$, we have $x_1 \ne x_2$. Since $S$ is closed under conjugation, we have $h_1 := x_2 g_2 x_1^{-1} = x_1 g_1 x_1^{-1} \in S$ and $h_2 := x_1 g_1 x_2^{-1} = x_2 g_2 x_2^{-1} \in S$. Since $h_1 x_1 = x_2 g_2 = x_1 g_1 = h_2 x_2$, by (b) above we have $h_1 = h_2$ and hence $x_1 = x_2$, a contradiction. 
\qed
\end{proof}

The following result follows from Lemmas \ref{lem:pseudo} and \ref{lem:partition} immediately.

\begin{corollary}
\label{cor:pseudo}
Suppose $C \subseteq G$ is a total perfect code in a Cayley graph $\Cay(G, S)$. 
If $gC = Cg$ for every $g \in S$, then there exists a pseudocovering $p: \Cay(G, S) \rightarrow K_{|S|}$ such that $gC, g \in S$ are the fibres of $p$.  
Moreover, if $C$ is a normal subgroup of $G$, then $p$ is a $C$-pseudocovering.
\end{corollary}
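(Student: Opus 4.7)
The plan is to combine Lemmas \ref{lem:partition} and \ref{lem:pseudo} for the first assertion and then identify the fibres of the resulting pseudocovering with the orbits of $C$ acting by left multiplication. By Lemma \ref{lem:partition}(a) together with the hypothesis $gC = Cg$ for every $g \in S$, each $gC = Cg$ is a total perfect code in $\Cay(G,S)$. Lemma \ref{lem:partition}(b) then shows that these $|S|$ cosets partition $V(\Cay(G,S))$. Feeding this partition into Lemma \ref{lem:pseudo} yields a pseudocovering $p : \Cay(G,S) \to K_{|S|}$; tracing through the underlying construction from Lemma \ref{lem:tri}(b), $p$ sends each $x \in gC$ to the vertex $v_g$ of $K_{|S|}$ labelled by $g$, so the fibres of $p$ are precisely the sets $gC$, $g \in S$.

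For the ``moreover'' clause, suppose $C \trianglelefteq G$. Left multiplication by any $c \in C$ preserves $\Cay(G,S)$, since $(cx)(cy)^{-1} = xy^{-1}$, so $C \le \Aut(\Cay(G,S))$, and the $C$-orbit through $x \in G$ is the right coset $Cx$, which equals $xC$ by normality. By Lemma \ref{lem:tri}(a), $|G|/|C| = |S|$, so the partition $\{gC : g \in S\}$ already enumerates every left coset of $C$; hence $\PP_C = \{gC : g \in S\}$, and the $C$-orbits coincide with the fibres of $p$. Defining $h : K_{|S|} \to \Cay(G,S)/\PP_C$ by $h(v_g) := gC$ then gives a bijection; it is a graph isomorphism because, for any distinct $g, g' \in S$, every vertex of $g'C$ has a neighbour in the total perfect code $gC$, so all pairs of distinct orbits are joined in the quotient. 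The identity $h(p(x)) = h(v_g) = gC$ for $x \in gC$ makes $h \circ p$ equal to the quotient map $\Cay(G,S) \to \Cay(G,S)/\PP_C$, so $p$ is a $C$-pseudocovering.

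The only subtlety I anticipate is that the in-fibre matchings $\Cay(G,S)[gC]$ could in principle produce loops or multiple edges in the quotient $\Cay(G,S)/\PP_C$. Definition \ref{def:pseudo} anticipates exactly this by first deleting the matching in each $p^{-1}(v)$ before asking for a covering projection, with the quotient understood as a simple graph; with that convention in force, the corollary is essentially a repackaging of the previous two lemmas in the language of group actions, and no computation beyond this bookkeeping is needed.
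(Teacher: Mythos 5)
Your overall route is exactly the one the paper intends (the paper gives no explicit argument, saying only that the corollary ``follows from Lemmas \ref{lem:pseudo} and \ref{lem:partition} immediately''): combine Lemma \ref{lem:partition}(a) with the hypothesis $gC=Cg$ to see that each $gC$ is a total perfect code, use Lemma \ref{lem:partition}(b) to see that these $|S|$ sets partition $G$, and feed the partition into Lemma \ref{lem:pseudo} to get the pseudocovering with fibres $gC$. That part, and your handling of the quotient in the ``moreover'' clause, is fine.

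There is, however, one concrete error in your verification that $C\le\Aut(\Cay(G,S))$. With the paper's adjacency convention ($x\sim y$ iff $xy^{-1}\in S$), left multiplication $x\mapsto cx$ gives $(cx)(cy)^{-1}=c\,xy^{-1}c^{-1}$, \emph{not} $xy^{-1}$; so left translation is an automorphism only when $S$ is closed under conjugation by $c$, which is not assumed in the ``moreover'' clause (only $C\trianglelefteq G$ is assumed there, not that $S$ is a union of conjugacy classes). The fix is to let $C$ act by \emph{right} multiplication, $x\mapsto xc$, which satisfies $(xc)(yc)^{-1}=xy^{-1}$ and is always an automorphism (this is the action $\hat g$ used in the proof of Lemma \ref{lem:partition}(a)). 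Its orbits are the left cosets $xC$, which equal $Cx$ by normality, so your identification of the orbit partition with $\{gC:g\in S\}$ and the rest of your argument go through unchanged.
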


The main results in this section are Theorem \ref{thm:pseudo} and Corollary \ref{cor:pseu} below, which are counterparts of \cite[Theorem 2]{L01} and \cite[Corollary 2]{L01}, respectively. As usual we denote $S^2 := \{gg': g, g' \in S\}$ for a subset $S$ of a group. 

\begin{theorem}
\label{thm:pseudo}
Suppose $C$ is a subset of a group $G$ closed under conjugation. Then the following are equivalent:
\begin{itemize}
\item[\rm (a)] $C$ is a total perfect code in $\Cay(G, S)$; 
\item[\rm (b)] there exists a pseudocovering $p: \Cay(G, S) \rightarrow K_{|S|}$ such that $gC$ is a fibre of $p$ for at least one element $g \in S$;
\item[\rm (c)] $C$ satisfies
$$
|C| |S| = |G|,\;\, C \cap \left((S^2 \setminus \{1\})C\right) = \emptyset.
$$
\end{itemize}
\end{theorem}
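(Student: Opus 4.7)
My plan is to mirror the structure of \cite[Theorem 2]{L01} by proving (a) $\Leftrightarrow$ (c) directly and (a) $\Leftrightarrow$ (b) through Corollary \ref{cor:pseudo}. The conjugation-closedness of $C$ enters essentially only in the equivalence with (b), where it is exactly what guarantees $gC=Cg$ for every $g\in G$ and so activates Corollary \ref{cor:pseudo}; the equivalence (a) $\Leftrightarrow$ (c) in fact goes through without any hypothesis on $C$.

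For (a) $\Rightarrow$ (c), the cardinality identity $|C||S|=|G|$ is Lemma \ref{lem:tri}(a). For the second half, suppose for contradiction that $c_1\in C\cap(S^2\setminus\{1\})C$, so $c_1=s_1 s_2 c_2$ for some $c_2\in C$ and $s_1,s_2\in S$ with $s_1 s_2\ne 1$. The key move is to set $y:=s_2 c_2=s_1^{-1}c_1$, which satisfies $y c_2^{-1}=s_2\in S$ and $c_1 y^{-1}=s_1\in S$; hence $y$ is adjacent in $\Cay(G,S)$ to both $c_1$ and $c_2$. Since $s_1 s_2\ne 1$ forces $c_1\ne c_2$, we have found two distinct elements of $C$ adjacent to $y$, contradicting (a). For (c) $\Rightarrow$ (a), the same identity runs in reverse together with a double count: $\sum_{v\in G}|\Ga(v)\cap C|=|C|\,|S|=|G|$ shows the mean number of neighbours in $C$ is one, while if some $y$ had two distinct neighbours $c_1,c_2\in C$ then $c_1 c_2^{-1}=(c_1 y^{-1})(y c_2^{-1})\in S^2\setminus\{1\}$ would place $c_1$ in the forbidden set. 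Hence every vertex has at most, and so exactly, one neighbour in $C$.

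For (a) $\Rightarrow$ (b), conjugation-closedness gives $gC=Cg$ for every $g\in S$, and Corollary \ref{cor:pseudo} then delivers a pseudocovering $p:\Cay(G,S)\to K_{|S|}$ whose fibres are $\{gC:g\in S\}$. For (b) $\Rightarrow$ (a), Lemma \ref{lem:pseudo} makes every fibre of a pseudocover of $K_{|S|}$ a total perfect code, so the specified $gC$ is one; Lemma \ref{lem:partition}(a) then applies with right-multiplier $g^{-1}\in S$ (using $S=S^{-1}$) to transform $gC$ into the total perfect code $(gC)g^{-1}=gCg^{-1}=C$, the last equality again by conjugation-closedness. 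I expect the only delicate point to be the choice of witness $y=s_2 c_2$ in the direction (a) $\Rightarrow$ (c), and the parallel telescoping $c_1 c_2^{-1}=(c_1 y^{-1})(y c_2^{-1})$ in its converse; the remaining implications are bookkeeping on top of the pseudocover machinery already developed.
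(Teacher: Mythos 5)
Your proof is correct and follows essentially the same route as the paper: (a)$\Leftrightarrow$(b) via Corollary \ref{cor:pseudo}, Lemma \ref{lem:pseudo} and Lemma \ref{lem:partition}(a) with the conjugation identity $(gC)g^{-1}=C$, and (a)$\Leftrightarrow$(c) via the same common-neighbour witness $y=s_2c_2=s_1^{-1}c_1$. The only cosmetic difference is in (c)$\Rightarrow$(a), where you replace the paper's explicit partition $\{gC:g\in S\}$ of $G$ by a double-counting average to get existence of a neighbour in $C$; both arguments are sound, and your side remark that (a)$\Leftrightarrow$(c) needs no conjugation-closedness of $C$ is accurate.
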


\begin{proof}
Denote $\Ga = \Cay(G, S)$.

(a) $\Rightarrow$ (b) This follows from Corollary \ref{cor:pseudo} immediately. 

(b) $\Rightarrow$ (a) Since $K_{|S|}$ is connected, all fibres of $p$ have the same size. Since one of these fibres is assumed to be $gC$ for some $g \in S$, all fibres of $p$ should have size $|gC| = |C|$. Hence $|C| |S| = |G|$. By Lemma \ref{lem:pseudo}, $gC$ is a total perfect code in $\Ga$. Since $C$ is closed under conjugation, we have $(gC)g^{-1} = C$. Since $g^{-1} \in S$, by Lemma \ref{lem:partition}(a), $C$ is a total perfect code in $\Ga$.  

(a) $\Rightarrow$ (c)  Suppose that $C$ is a total perfect code in $\Ga$. Since (a) and (b) are equivalent as proved above, we have $|C| |S| = |G|$ by the argument in the previous paragraph. It remains to prove that $C \cap \left((S^2 \setminus \{1\})C\right) = \emptyset$. Suppose otherwise. Then there exist $x_1, x_2 \in C$ and $g_1, g_2 \in S$ with $g_1 g_2 \ne 1$ such that $x_1 = g_1 g_2 x_2$. Thus $x_1 \ne x_2$ and $g_1^{-1} x_1 = g_2 x_2$. Hence $g_2 x_2$ is adjacent to distinct vertices $x_1, x_2$ in $C$, contradicting the assumption that $C$ is a total perfect code in $\Ga$.  

(c) $\Rightarrow$ (a) The assumption $C \cap \left((S^2 \setminus \{1\})C\right) = \emptyset$ implies $g_1C \cap g_2C = \emptyset$ for distinct $g_1, g_2 \in S$. This together with the assumption $|C| |S| = |G|$ implies that $\{gC: g \in S\}$ is a partition of $G$. Thus the neighbourhood of $C$ in $\Ga$ is given by
$$
\cup_{x \in C}\ \Ga(x) = \cup_{x \in C}\ Sx = \cup_{g \in S}\ gC = G.
$$
In other words, every element of $G \setminus C$ is adjacent to at least one element of $C$ in $\Ga$. If an element $z \in G \setminus C$ is adjacent to distinct $x_1, x_2 \in C$, then there exist $g_1, g_2 \in S$ such that $z = g_1 x_1 = g_2 x_2$. Since $x_1 \ne x_2$, we have $g_1 \ne g_2$ and so $x_1 =  g_1^{-1} g_2 x_2 \in C \cap \left((S^2 \setminus \{1\})C\right)$, contradicting the assumption $C \cap \left((S^2 \setminus \{1\})C\right) = \emptyset$. Therefore, every element of $G \setminus C$ is adjacent to a unique element of $C$ in $\Ga$. 

Since $\{gC: g \in S\}$ is a partition of $G$, every element $x \in C$ is of the form $x = g_1 x_1$ for some $g_1 \in S$ and $x_1 \in C$, and so $x$ is adjacent to $x_1$ in $\Ga$. If $x$ is also adjacent to some $x_2 \in C$ with $x_2 \ne x_1$, then $x = g_2 x_2$ for some $g_2 \in S$. So $g_1 \ne g_2$ and $g_1 x_1 = g_2 x_2 \in g_1 C \cap g_2 C$, which is a contradiction. Therefore, every element of $C$ is adjacent to a unique element of $C$ in $\Ga$. This together with what we proved in the previous paragraph implies that $C$ is a total perfect code in $\Ga$. 
\qed
\end{proof}

\begin{corollary}
\label{cor:pseu}
Let $N$ be a normal subgroup of a group $G$. Then the following are equivalent:
\begin{itemize}
\item[\rm (a)] $N$ is a total perfect code in $\Cay(G, S)$;
\item[\rm (b)] $\Cay(G, S)$ is an $N$-pseudocovering graph of $K_{|S|}$;
\item[\rm (c)] $N$ satisfies
$$
|G:N| = |S|,\;\; N \cap S^2 = \{1\}.
$$
\end{itemize}
Moreover, if one of these conditions holds, then $|N \cap S| = 1$ and the unique element of $N \cap S$ must be an involution.  
\end{corollary}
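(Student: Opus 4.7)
The plan is to derive this corollary from Theorem~\ref{thm:pseudo} and Corollary~\ref{cor:pseudo} by exploiting the extra structure that comes from $N$ being a subgroup (closed under multiplication and inversion), and then handle the ``moreover'' statement by a short counting argument.

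First I would note that since $N$ is normal, it is in particular closed under conjugation, so Theorem~\ref{thm:pseudo} applies with $C = N$. The equivalence (a)$\Leftrightarrow$(c) then reduces to showing that when $C = N$ is a subgroup, the conditions ``$|N||S|=|G|$ and $N \cap ((S^2 \setminus \{1\})N) = \emptyset$'' are the same as ``$|G:N| = |S|$ and $N \cap S^2 = \{1\}$''. The first condition is a direct rewrite. For the second, since $N$ is a subgroup, $gn \in N$ iff $g \in N$, so $(S^2 \setminus \{1\})N \cap N = \emptyset$ iff $(S^2 \setminus \{1\}) \cap N = \emptyset$; together with $1 \in N \cap S^2$ (using $S = S^{-1} \neq \emptyset$), this gives exactly $N \cap S^2 = \{1\}$.

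For (a)$\Leftrightarrow$(b), I would invoke Corollary~\ref{cor:pseudo}. The forward direction is immediate: $N$ normal gives $gN = Ng$ for every $g \in S$, so Corollary~\ref{cor:pseudo} (together with its ``moreover'' clause) produces an $N$-pseudocovering $p: \Cay(G,S) \to K_{|S|}$ whose fibres are the cosets $gN$, $g \in S$. For the converse, if $\Cay(G,S)$ is an $N$-pseudocovering of $K_{|S|}$, then the fibres are the $N$-orbits on $G$ under left multiplication, which are precisely the cosets $Ng = gN$; by Lemma~\ref{lem:pseudo} each fibre is a total perfect code, and $N$ itself is the fibre containing $1$, so $N$ is a total perfect code in $\Cay(G,S)$.

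For the ``moreover'' statement, assume the equivalent conditions hold. Then $\{gN : g \in S\}$ is a partition of $G$ into $|S| = |G:N|$ cosets; since there are only $|G:N|$ cosets of $N$ in $G$ altogether, the map $S \to G/N$, $g \mapsto gN$, is a bijection onto $G/N$. In particular exactly one $g \in S$ satisfies $gN = N$, i.e.\ $g \in N$, so $|N \cap S| = 1$. Denote this unique element by $g$; then $g^{-1} \in S$ (as $S = S^{-1}$) and $g^{-1} \in N$, so $g^{-1} \in N \cap S = \{g\}$, forcing $g = g^{-1}$. Since $1 \notin S$ we have $g \neq 1$, so $g$ is an involution.

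The whole argument is routine once Theorem~\ref{thm:pseudo} and Corollary~\ref{cor:pseudo} are in hand; the only place demanding any care is checking that the subgroup structure really does let one collapse $N \cap ((S^2 \setminus \{1\})N) = \emptyset$ to $N \cap S^2 = \{1\}$ in both directions, but this is a one-line calculation.
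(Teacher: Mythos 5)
Your proof is correct and follows essentially the same route as the paper: both reduce the equivalence of (a), (b), (c) to Theorem~\ref{thm:pseudo} and Corollary~\ref{cor:pseudo}, the only real work being the check that for a subgroup the condition $N \cap \left((S^2 \setminus \{1\})N\right) = \emptyset$ collapses to $N \cap S^2 = \{1\}$. The sole (immaterial) difference is in the ``moreover'' clause, where you count the cosets $gN$, $g \in S$, and use $S = S^{-1}$ to force $g = g^{-1}$, while the paper looks at the neighbours of $1$ inside $N$ and concludes via $g^2 \in N \cap S^2 = \{1\}$.
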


\begin{proof}
By Corollary \ref{cor:pseudo} and Theorem \ref{thm:pseudo}, to prove the equivalence of (a), (b) and (c), it suffices to prove that $N \cap \left((S^2 \setminus \{1\})N\right) = \emptyset$ if and only if $N \cap S^2 = \{1\}$. 

Suppose $N \cap \left((S^2 \setminus \{1\})N\right) = \emptyset$. Since $N$ is a subgroup of $G$, we have $1 \in N$ and so $N \cap (S^2 \setminus \{1\}) = \emptyset$. In other words, $N \cap S^2 = \{1\}$. 

Now suppose $N \cap \left((S^2 \setminus \{1\})N\right) \ne \emptyset$. Then there exist $x_1, x_2 \in N$ and $g_1, g_2 \in S$ such that $x_1 = g_1 g_2 x_2$ and $g_1 g_2 \ne 1$. Since $N$ is a subgroup of $G$, we have $x_1 x_2^{-1} = g_1 g_2 \in N \cap S^2$ and hence $N \cap S^2 \ne \{1\}$.

Suppose that one of (a)-(c) holds so that $N$ is a total perfect code in $\Cay(G, S)$. Since $1 \in N$ and the set of neighbours of $1$ in $N$ is $N \cap S$, we have $|N \cap S| = 1$ by the definition of a total perfect code. Let $g$ be the unique element of $N \cap S$. Then $g^2 \in N \cap S^2$ and so $g^2 = 1$ by (c).  
\qed
\end{proof}

Corollary \ref{cor:pseu} imposes strong conditions for a normal subgroup $N$ of $G$ to be a total perfect code in $\Cay(G, S)$. In the same fashion, one can show that for a normal subgroup $N$ of $G$ and an element $g \in S \setminus N$, $N \cup gN$ is a total perfect code in $\Cay(G, S)$ if and only if $|G:N| = 2|S|$, $N \cap S^2 = \{1\}$ and $N \cap S = gN \cap S^2 = g^{-1}N \cap S^2 = \emptyset$. 

Using additive notation for abelian groups, we write $0, S+S, -S$ and $C+x$ in place of $1, S^2, S^{-1}$ and $Cx$, respectively. Denote $C - C = \{g - g': g, g' \in C\}$ for any subset $C$ of an abelian group.

\begin{corollary}
\label{cor:abelian}
Let $G$ be an abelian group. A Cayley graph $\Cay(G, S)$ on $G$ admits a total perfect code if and only if there exists a subset $C$ of $G$ such that $|C| |S| = |G|$ and $(C - C) \cap (S+S) = \{0\}$; under these conditions $C$ is a total perfect code in $\Cay(G, S)$. Moreover, a subgroup $C$ of $G$ is a total perfect code in $\Cay(G, S)$ if and only if $|C| |S| = |G|$ and $C \cap (S+S) = \{0\}$.
\end{corollary}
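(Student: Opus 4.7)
The plan is to specialize Theorem~\ref{thm:pseudo} to the abelian setting. Since $G$ is abelian, every subset $C \subseteq G$ is vacuously closed under conjugation, so the equivalence (a)$\Leftrightarrow$(c) of Theorem~\ref{thm:pseudo} applies to every $C$. Translated into additive notation, it says that $C$ is a total perfect code in $\Cay(G,S)$ if and only if $|C||S| = |G|$ and $C \cap \bigl(((S+S) \setminus \{0\}) + C\bigr) = \emptyset$.

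Next I would massage the second clause into the form stated in the corollary. The intersection $C \cap \bigl(((S+S) \setminus \{0\}) + C\bigr)$ is nonempty precisely when there exist $x_1, x_2 \in C$ and $s \in (S+S) \setminus \{0\}$ with $x_1 - x_2 = s$; equivalently, when $(C - C) \cap ((S+S) \setminus \{0\}) \ne \emptyset$. Thus the second clause is equivalent to $(C - C) \cap (S+S) \subseteq \{0\}$. Now the counting condition $|C||S| = |G|$ forces $|S| \ge 1$ and $C \ne \emptyset$, so $0 \in C - C$, and the property $S = -S$ gives $0 \in S + S$; hence $(C - C) \cap (S+S) \subseteq \{0\}$ is the same as $(C - C) \cap (S+S) = \{0\}$. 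This simultaneously delivers both directions of the first biconditional and also yields the extra assertion that any witnessing $C$ is itself a total perfect code in $\Cay(G,S)$.

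For the second assertion, if $C$ is a subgroup of $G$ then $0 \in C$ and $C$ is closed under subtraction, so $C - C = C$. The criterion $(C - C) \cap (S+S) = \{0\}$ then collapses to $C \cap (S+S) = \{0\}$, which gives the claimed characterization for subgroups.

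No real obstacle presents itself, since the argument is essentially an additive-notation rewriting of Theorem~\ref{thm:pseudo}(c) together with a one-line simplification in the subgroup case; the only point requiring a moment of care is to verify that the counting condition $|C||S| = |G|$ rules out $C = \emptyset$, which is what guarantees $0 \in C - C$ in the final step.
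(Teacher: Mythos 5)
Your proposal is correct and is exactly the intended derivation: the paper states Corollary \ref{cor:abelian} without proof as an immediate specialization of Theorem \ref{thm:pseudo} (every subset of an abelian group being closed under conjugation), and your additive-notation rewriting of condition (c), including the observation that the counting condition forces $0 \in (C-C) \cap (S+S)$ and the collapse $C - C = C$ for subgroups, fills in precisely the routine details the paper leaves implicit.
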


\begin{example}
\label{ex:circ}
{\em 
Let $\Cay(\ZZZ_n, S)$ be a circulant graph, where $S \subseteq \ZZZ_n \setminus \{[0]\}$ satisfies $-S = S$. By Corollary \ref{cor:abelian}, $\Cay(\ZZZ_n, S)$ admits a total perfect code that is a subgroup of $\ZZZ_n$ if and only if $|S| = m$ is a divisor of $n$ and for any $[g_1], [g_2] \in S$, $g_1 + g_2$ is not a multiple of $m$ unless it is a multiple of $n$. Moreover, in this case $C = \{[km]: k \in \ZZZ\}$ is a total perfect code in $\Cay(\ZZZ_n, S)$. 

As a concrete example, the circulant $\Cay(\ZZZ_{18}, S)$ where $S=\{[1], [9], [17]\}$ admits $C = \{[0], [3], [6], [9], [12], [15]\}$ as a total perfect code because $|S|$ divides $18$ and $[0]$ is the only common element of $C$ and $S+S = \{[0], [2], [8], [10], [16]\}$. 

Similarly, $\Cay(\ZZZ_{20}, S)$ with $S=\{[1], [2], [10], [18], [19]\}$ admits $C = \{[0], [5], [10], [15]\}$ as a total perfect code. 
\qed
}
\end{example}

\section{Total perfect codes in cubelike graphs}
\label{sec:cube-gen}

The $n$-dimensional hypercube $Q_n$ is the Cayley graph on the elementary abelian 2-group $\ZZZ_2^n$ with respect to the set of vectors with exactly one nonzero coordinate. In general, any Cayley graph on $\ZZZ_2^n$, $n \ge 1$ is said to be \emph{cubelike} (a notion introduced by L. Lov\'{a}sz according to \cite{P}). The following result is a generalization of \cite[Theorem 9.2.3]{G}, where the same necessary and sufficient condition was given in the special case of hypercubes.  

\begin{theorem}
\label{thm:cube-gen} 
A connected cubelike graph admits a total perfect code if and only if its degree is a power of $2$. Moreover, we give a construction of linear total perfect codes in any cubelike graph with degree a power of $2$.  
\end{theorem}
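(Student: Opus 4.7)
The plan is to handle the two directions of the biconditional separately: the forward one by a direct appeal to Lemma~\ref{lem:tri}(a), and the reverse by an explicit construction of a linear total perfect code via induction on the exponent $k$ in $|S| = 2^k$.

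For the forward direction, suppose $\Cay(\ZZZ_2^n, S)$ admits a total perfect code. By Lemma~\ref{lem:tri}(a) its degree $|S|$ divides $|V(\Ga)| = 2^n$, so $|S|$ is a power of $2$.

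For the reverse direction, by Corollary~\ref{cor:abelian} it suffices to construct a subgroup $C \le \ZZZ_2^n$ of index $2^k$ with $C \cap (S+S) = \{0\}$; equivalently, a surjective $\ZZZ_2$-linear map $\phi \colon \ZZZ_2^n \to \ZZZ_2^k$ whose restriction $\phi|_S$ is a bijection onto $\ZZZ_2^k$ (then $C = \ker \phi$ is the code). I will construct $\phi$ by induction on $k$. The base case $k = 0$ is trivial, since connectedness forces $n = 1$, $S = \{1\}$, and $C = \ZZZ_2$ works. For the inductive step with $k \ge 1$, produce a nonzero linear functional $f \colon \ZZZ_2^n \to \ZZZ_2$ that splits $S$ evenly, $|S \cap \ker f| = 2^{k-1}$; its existence follows from an averaging argument, as the mean of $|S \cap \ker g|$ over all $g \in V^* = \mathrm{Hom}(\ZZZ_2^n, \ZZZ_2)$ is exactly $|S|/2$. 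Setting $H = \ker f$ and $S_0 = S \cap H$, the inductive hypothesis applied inside $H \cong \ZZZ_2^{n-1}$ with connection set $S_0$ yields a linear $\phi_0 \colon H \to \ZZZ_2^{k-1}$ with $\phi_0|_{S_0}$ bijective onto $\ZZZ_2^{k-1}$; extending $\phi_0$ to some $\tilde\phi_0 \colon \ZZZ_2^n \to \ZZZ_2^{k-1}$ and setting $\phi(v) := (f(v), \tilde\phi_0(v))$ gives a candidate for $\phi$.

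The main obstacle is choosing the extension $\tilde\phi_0$ so that $\phi$ is also bijective on the other half $S \setminus S_0$---that is, so that $\tilde\phi_0$ restricts to a bijection $S \setminus S_0 \to \ZZZ_2^{k-1}$. This is a genuine compatibility condition between the linear dependences among elements of $S$ that cross the hyperplane $H$ and the labels in $\ZZZ_2^{k-1}$ already prescribed on $S_0$. Reformulated globally, the sufficiency direction is asking for a bijection $\ell \colon S \to \ZZZ_2^k$ such that $\sum_{s \in T} s = 0$ in $\ZZZ_2^n$ implies $\sum_{s \in T} \ell(s) = 0$ in $\ZZZ_2^k$ for every $T \subseteq S$---equivalently, an ordering of the $2^k$ columns of the parity-check matrix of the extended Hamming code of length $2^k$ (the canonical linear total perfect code of the hypercube $Q_{2^k}$) that is compatible with the linear dependences of $S$ in $\ZZZ_2^n$. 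Producing such a labelling, either by refining the inductive choice of $\tilde\phi_0$ or by an ad hoc combinatorial construction exploiting the group structure of $\ZZZ_2^n$, will constitute the technical heart of the proof.
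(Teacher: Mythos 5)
Your necessity argument is exactly the paper's: Lemma \ref{lem:tri}(a) forces $|S|$ to divide $2^n$. Your reduction of sufficiency to finding a surjective linear map $\phi\colon\ZZZ_2^n\to\ZZZ_2^k$ that restricts to a bijection on $S$ (equivalently a subgroup $C=\ker\phi$ of index $2^k$ with $C\cap(S+S)=\{0\}$) also matches the paper, which encodes $\phi$ as an $n\times t$ matrix $M$ of rank $t$ such that the vectors ${\bf u}_iM$ are pairwise distinct. But your construction of $\phi$ has two genuine gaps. First, the averaging argument does not produce a functional $f$ with $|S\cap\ker f|=2^{k-1}$. Writing $|S\cap\ker g|=\frac{1}{2}\left(|S|+\sum_{u\in S}(-1)^{g(u)}\right)$, an exact split at some $g\ne 0$ is precisely the statement that $0$ is an eigenvalue of $\Cay(\ZZZ_2^n,S)$; a mean of $|S|/2$ for an integer-valued function on a finite set does not force the value $|S|/2$ to be attained, and by Corollary \ref{cor:nec-a} the existence of such a $g$ is itself a necessary condition for the code you are trying to build, so it cannot be had for free. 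Second, and more seriously, you explicitly leave open the step of extending $\phi_0$ so that $\phi$ is also bijective on $S\setminus S_0$; as you say yourself, that compatibility condition is the technical heart of the theorem, so the proposal stops short of a proof rather than completing one. (A smaller repair is also needed: $S_0=S\cap H$ need not span $H$, so the inductive hypothesis, stated for connected cubelike graphs, does not apply verbatim; one would have to work inside $\langle S_0\rangle$ and then extend.)

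The paper takes a non-inductive route. It forms the $d\times n$ matrix $U$ with rows ${\bf u}_1,\dots,{\bf u}_d$ (of rank $n$ since $S$ generates $\ZZZ_2^n$), takes a $d\times n$ matrix $Q$ of rank $n$ whose first $t$ columns list all vectors of $V(t,2)$ as rows, writes $Q=UR$ with $R$ nonsingular, and sets $M=RP$ with $P$ the projection onto the first $t$ coordinates; then $UM=QP$ has pairwise distinct rows, which is exactly the condition $C\cap(S+S)=\{{\bf 0}_n\}$ for $C$ the null space of $M$. Note that the identity $Q=UR$ requires $Q$ and $U$ to have the same column space, so the content your inductive labelling is meant to supply is concentrated there (one must choose the row order of $Q$, i.e.\ the enumeration of $V(t,2)$, compatibly with $U$) --- it is the same bijective-linear-labelling problem you isolated, resolved in one global linear-algebra step rather than by induction on $k$. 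To salvage your approach you must actually prove the extension step; alternatively, adopt the one-shot matrix construction.
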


\begin{proof} 
We identify $\ZZZ_{2}^{n}$ with the additive group of the $n$-dimensional linear space $V(n, 2)$ over $\GF(2)$. The vectors of $V(n, 2)$ are treated as row vectors and the zero vector of $V(n, 2)$ is denoted by ${\bf 0}_n$.
Since the operation of $\ZZZ_{2}^{n}$ is addition of vectors, we use $C + {\bf x}$ and $S+S$ in place of $C{\bf x}$ and $S^2$ respectively.   

Let $\Ga = \Cay(\ZZZ_{2}^{n}, S)$ be a connected cubelike graph with degree $d$. 
Then $S = \{{\bf u}_1, \ldots, {\bf u}_d\} \subseteq V(n, 2) \setminus \{{\bf 0}_n\}$ for $d$ distinct vectors ${\bf u}_1, \ldots, {\bf u}_d \in V(n, 2) \setminus \{{\bf 0}_n\}$. Since $\Ga$ is connected, $S$ must be a generating set of $\ZZZ_{2}^{n}$. In other words, $S$ contains a basis of $V(n, 2)$, and hence $n \leq d$. 

Suppose $\Ga$ admits a total perfect code. By Lemma \ref{lem:tri}, $d$ is a divisor of the order $2^{n}$ of $\Ga$. Thus $d = 2^t$ for some $1 \le t < n \le 2^t$ and the necessity is proved. 

To prove the sufficiency we assume that the degree of $\Ga$ is of the form $d = 2^t$ for some integer $t$ with $1 \le t < n \le 2^t$. By Corollary \ref{cor:pseu}, it suffices to prove the existence of a subgroup $C$ of $\ZZZ_{2}^{n}$ with index $|\ZZZ_{2}^{n}:C| = 2^t$ such that $C \cap (S+S) = \{{\bf 0}_n\}$. We achieve this by constructing such a subgroup $C$ explicitly with the help of an appropriate matrix over $\GF(2)$. 

Since $1 \le t < n \le d = 2^t$, there exists a $d \times n$ matrix $Q$ of rank $n$ over $\GF(2)$ such that the rows of $QP$ give all vectors of $V(t, 2)$, where 
$$
P = \bmat{I_{t}\\ 0_{(n - t) \times t}}
$$ 
with $I_{t}$ the identity matrix and $0_{(n - t) \times t}$ the zero-matrix of corresponding dimensions. (In fact, we may add $n - t$ column vectors of dimension $d$ to the $d \times t$ matrix whose rows are the vectors of $V(t, 2)$ such that the resultant matrix $Q$ has rank $n$.) 
Since $S$ contains a basis of $V(n, 2)$, the matrix $U$ with rows ${\bf u}_1, \ldots, {\bf u}_d$ is a $d \times n$ matrix of rank $n$. Since $Q$ and $U$ have the same dimension and rank, there exists a non-singular $n \times n$ matrix $R$ over $\GF(2)$ such that $Q = UR$. The non-singularity of $R$ implies that $M = RP$ is an $n \times t$ matrix with rank $t$. Thus the null space 
$\{{\bf x} \in V(n, 2): {\bf x}M = {\bf 0}_{t}\}$ of $M$ is an $(n - t)$-dimensional subspace of $V(n, 2)$. Therefore, its additive group $C$ is a subgroup of $\ZZZ_{2}^{n}$ with $|\ZZZ_{2}^{n}:C| = 2^t$. 
On the other hand, since the rows of $UM = U(RP) = QP$ give all vectors of $V(t, 2)$, the vectors ${\bf u}_i M$, $i = 1, \ldots, d$, are pairwise distinct. In other words, $({\bf u}_i + {\bf u}_j)M \ne {\bf 0}_{t}$ for $i \ne j$, or equivalently $C \cap (S+S) = \{{\bf 0}_n\}$. Therefore, by Corollary \ref{cor:pseu}, $C$ is a total perfect code in $\Ga$. Obviously, $C$ is a linear code with the transpose of $M$ as its parity check matrix. 
\qed
\end{proof}

The proof above gives an explicit construction of a linear total perfect code $C$ in the cubelike graph $\Cay(\ZZZ_{2}^{n}, S)$. By Lemma \ref{lem:partition}, the cosets $C + {\bf u}_i$, $i = 1, \ldots, d$, are all total perfect codes in $\Cay(\ZZZ_{2}^{n}, S)$ and they form a partition of the whole space $V(n, 2)$. 

It was observed by an anonymous referee of this paper that not every total perfect code in a cubelike graph with degree a power of $2$ is of the form $C + {\bf u}_i$ above. For example, let $n = 2^t$, $t \ge 4$, and let $C$ be a non-linear perfect code in $Q_{n-1}$ containing ${\bf 0}_{n-1}$ (there are many such non-linear perfect codes as seen in \cite{Heden1}). Then $\{0,1\} \times C$ is a total perfect code in $Q_n$, but it is not a coset of a linear total perfect code.

Theorem \ref{thm:cube-gen} implies the following result. 

\begin{corollary}
\label{thm:cube}
(\cite[Theorem 9.2.3]{G}) The hypercube $Q_d$ admits a total perfect code if and only if $d = 2^t$ for some integer $t \ge 1$.  
\end{corollary}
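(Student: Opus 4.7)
The plan is to deduce Corollary \ref{thm:cube} as an immediate specialization of Theorem \ref{thm:cube-gen}. First I would observe that $Q_d$ is by definition $\Cay(\ZZZ_2^d, S)$ where $S = \{\mathbf{e}_1, \ldots, \mathbf{e}_d\}$ is the set of standard basis vectors of $V(d, 2)$. So $Q_d$ is a cubelike graph, and its degree is $|S| = d$.

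Second, I would verify the hypothesis of connectedness in Theorem \ref{thm:cube-gen}. This is clear since the standard basis vectors generate $\ZZZ_2^d$, hence $S$ is a generating set of the underlying group and $Q_d$ is connected.

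With these two observations, Theorem \ref{thm:cube-gen} applies directly and tells us that $Q_d$ admits a total perfect code if and only if its degree $d$ is a power of $2$, which is exactly the statement $d = 2^t$ for some integer $t \ge 1$. There is no real obstacle here; the corollary is a one-line consequence of the preceding theorem, and all the technical work (the construction via the matrix $Q$ and the null-space argument) is already encapsulated in the proof of Theorem \ref{thm:cube-gen}.
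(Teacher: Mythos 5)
Your proposal is correct and takes essentially the same route as the paper: the paper also presents this corollary as an immediate specialization of Theorem \ref{thm:cube-gen}, noting that $Q_d = \Cay(\ZZZ_2^d, S)$ with $S = \{{\bf e}_1, \ldots, {\bf e}_d\}$ the standard basis, so that $Q_d$ is a connected cubelike graph of degree $d$. Your explicit verification of connectedness and of the degree is exactly the (trivial) content the paper leaves implicit.
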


In \cite[Section 9.2.1]{G} the sufficiency was proved by showing that the direct sum of $\{0, 1\}$ and the well known Hamming code $H_t$ (that is, adding $0$ or $1$ at the beginning of each codeword of $H_t$) is a total perfect code of size $2^{n-t}$. This code is exactly the code $C$ constructed in the proof of Theorem \ref{thm:cube-gen} in the special case $Q_d = \Cay(\ZZZ_2^d, S)$ with $d = 2^t$, where $S = \{{\bf e}_1, \ldots, {\bf e}_d\}$ is the standard basis of $V(d, 2)$. This is because in this case the columns of the parity check matrix of $C$ (that is, the rows of $M$) are all vectors of $V(t,2)$ whilst the columns of the parity check matrix of $H_t$ are all nonzero vectors of $V(t,2)$. Note that $C + {\bf e}_i$, $i = 1, \ldots, d$, are also total perfect codes in $Q_d$. A related conjecture \cite[9.4.1]{G} asserts that, if $d = 2^t$, $t \ge 3$, then every dominating set of $Q_d$ with minimum size is a total perfect code. 

\begin{example}
\label{ex:Q4}
{\em 
By Corollary \ref{thm:cube}, $Q_4$ admits total perfect codes. Choose
$$
M=\bmat{1&0\\0&1\\1&1\\0&0}.
$$
Clearly $M$ has rank 2 and its rows are pairwise distinct. The additive group of the null space of $M$ is
$C_0 = \{(0, 0, 0, 0), (1, 1, 1, 0), (0, 0, 0, 1), (1, 1, 1, 1)\}$. 
By Corollary \ref{thm:cube}, $C_0$ is a total perfect code in $Q_4$. Moreover, by Lemma \ref{lem:partition}, the following are all total perfect codes in $Q_4$: $C_0 + {\bf e}_1 = \{(1, 0, 0, 0), (0, 1, 1, 0), (1, 0, 0, 1), (0, 1, 1, 1)\}$; $C_0 + {\bf e}_2 = \{(0, 1, 0, 0), (1, 0, 1, 0), (0, 1, 0, 1), (1, 0, 1, 1)\}$; $C_0 + {\bf e}_3 = \{(0, 0, 1, 0), (1, 1, 0, 0), (0, 0, 1, 1), (1, 1, 0, 1)\}$. 
(Note that $C_0 + {\bf e}_4 = C_0$.) These four codes form a partition of $V(4, 2)$; see Figure \ref{fig:Q4a} for an illustration. 
\qed
}
\end{example}

\vspace{-0.5cm}
\begin{figure}[ht]
\centering
\includegraphics*[height=7cm]{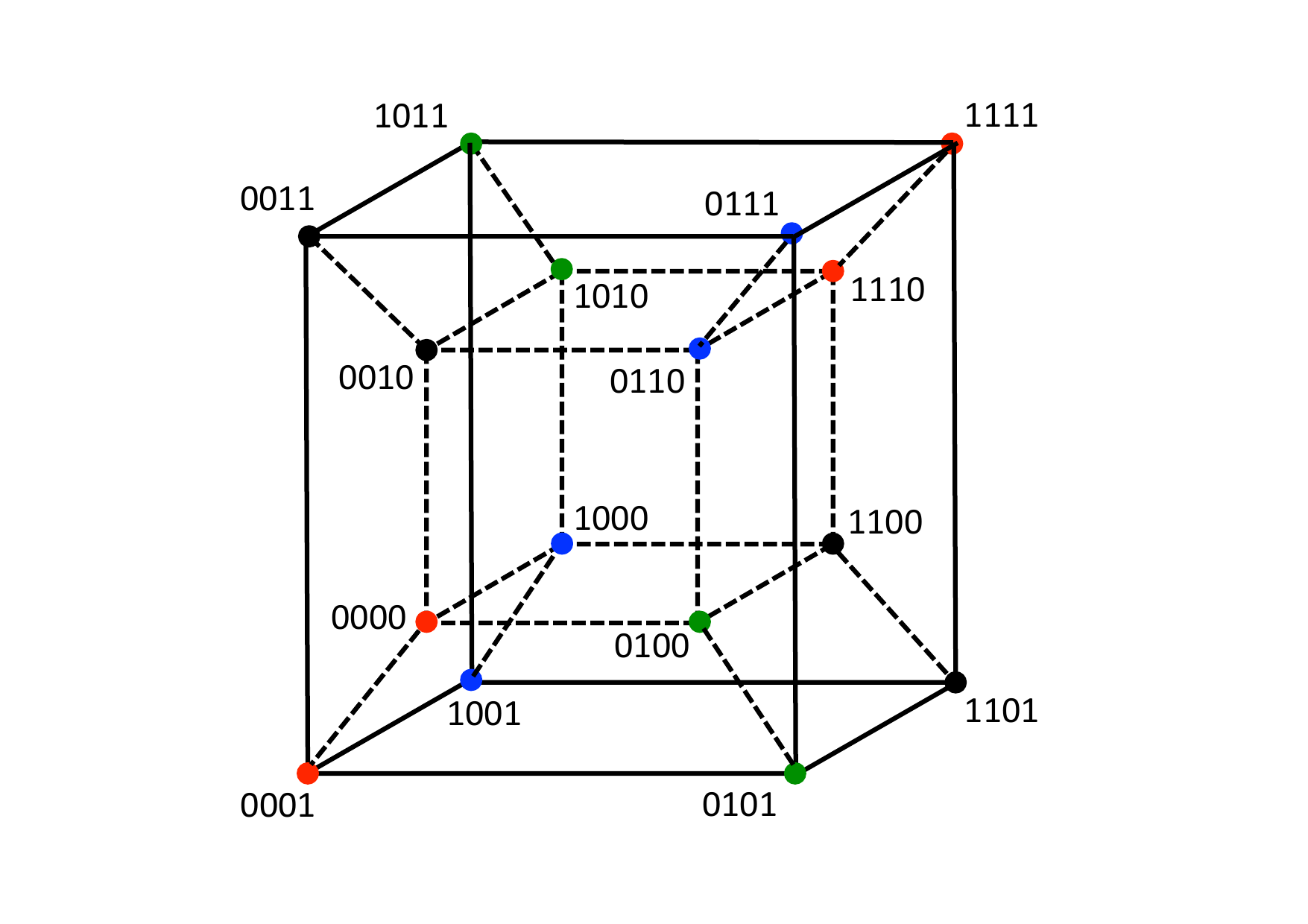}
\vspace{-0.5cm}
\caption{\small Total perfect codes in $Q_4$.} 
\label{fig:Q4a}
\end{figure} 

In contrast to $\ZZZ_2^n$, by Lemma \ref{lem:tri}(a), for any odd prime $p$, any Cayley graph on $\ZZZ_p^n$ does not admit any total perfect code.

\section{Necessary conditions}
\label{sec:nec}

Interesting necessary conditions for the existence of a perfect 1-code in a directed Cayley graph were given in \cite{E87} by using characters of the underlying group. In this section we prove analogous results for total perfect codes in undirected Cayley graphs with connection sets a union of conjugacy classes of the underlying group, by adapting the methodologies in \cite{E87}. We begin with a necessary condition for the existence of a total perfect code in a regular (but not necessarily Cayley) graph in terms of equitable partitions, and then apply it to two specific equitable partitions of the above-mentioned Cayley graphs.

\subsection{Total perfect codes and equitable partitions}
\label{subsec:eq}

In this subsection $\Ga = (V, E)$ is a $d$-regular graph, where $d \ge 1$, with adjacent matrix 
$$
A = (a_{uv})_{u, v \in V}.
$$

An \emph{equitable partition} \cite[Section 9.3]{GR} of $\Ga$ is a partition $\pi = \{V_1, \ldots, V_m\}$ of $V$ such that for every pair $i, j$, the number of neighbours of $u \in V_i$ in $V_j$ is a constant $b_{ij}$ independent of the choice of $u$. In \cite{E87} an equitable partition is called a regular partition. 

Denote by $\CCC V$ the vector space of functions from $V$ to the field of complex numbers $\CCC$, with addition and scalar multiplication defined in the usual way. As $V$ is finite, we may identify every $f \in \CCC V$ with the column vector $(f(v))_{v \in V}$. For each $v \in V$, define $\d_v \in \CCC V$ such that $\d_{v}(u) = 1$ if $u = v$ and $\d_{v}(u) = 0$ if $u \ne v$. Then every $f \in \CCC V$ can be written as $f = \sum_{v \in V} f(v) \d_v$. Define the `adjacency' linear mapping \cite{E87} by
\be
\label{eq:vphi}
\varphi: \CCC V \rightarrow \CCC V,\quad \d_v \mapsto \vp(\d_v) := \sum_{u \in \Ga(v)} \d_{u} = \sum_{u \in V} a_{uv} \d_{u}.
\ee
It is straightforward to verify \cite{E87} that the matrix of $\vp$ with respect to the `standard' basis $\{\d_v: v \in V\}$ of $\CCC V$ is exactly the adjacency matrix $A$ of $\Ga$. Alternatively, if $\CCC V$ is viewed as a vector space of column vectors, then $\vp$ maps $f \in \CCC V$ to $Af$. 

Define $\d_U = \sum_{v \in U} \d_v \in \CCC V$ for every $U \subseteq V$. That is, $\d_{U}(u) = 1$ if $u \in U$ and $0$ otherwise. In particular, $\d_V$ is the constant function $\mathbf{1}$ on $V$ (all-one column vector). It is not difficult to verify the following result. 

\begin{lemma}
\label{lem:equit}
(\cite[Lemma 2(ii)]{E87}; see also \cite[Lemma 9.3.2]{GR}) 
Let $\Ga = (V, E)$ be a $d$-regular graph. 
A partition $\{V_1, \ldots, V_m\}$ of $V$ is equitable if and only if the subspace of $\CCC V$ spanned by $\d_{V_1}, \ldots, \d_{V_m}$ is invariant under the linear mapping $\vp$. 
\end{lemma}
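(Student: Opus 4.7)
The plan is to compute $\vp(\d_{V_i})$ explicitly in the basis $\{\d_w : w \in V\}$ of $\CCC V$ and then read off what it means for this to lie in $\span\{\d_{V_1}, \ldots, \d_{V_m}\}$. First I would use (\ref{eq:vphi}), linearity of $\vp$, and the fact that $\Ga$ is undirected (so $u \in \Ga(v) \Leftrightarrow v \in \Ga(u)$) to write
\[
\vp(\d_{V_i}) \;=\; \sum_{v \in V_i}\sum_{u \in \Ga(v)} \d_u \;=\; \sum_{w \in V} |\Ga(w) \cap V_i|\, \d_w,
\]
so that the coefficient of $\d_w$ in $\vp(\d_{V_i})$ is simply the number of neighbours of $w$ that lie in $V_i$.

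For the forward direction, suppose $\{V_1, \ldots, V_m\}$ is equitable. Then for every $w$ in a fixed block $V_j$ the coefficient $|\Ga(w) \cap V_i|$ is the constant $b_{ji}$. Regrouping the sum above by blocks yields
\[
\vp(\d_{V_i}) \;=\; \sum_{j=1}^m b_{ji}\sum_{w \in V_j}\d_w \;=\; \sum_{j=1}^m b_{ji}\, \d_{V_j},
\]
which lies in the span of $\d_{V_1}, \ldots, \d_{V_m}$. Since this holds for every $i$ and $\vp$ is linear, the subspace is $\vp$-invariant.

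Conversely, assume the subspace is $\vp$-invariant. Then for each $i$ there exist scalars $c_{1i}, \ldots, c_{mi} \in \CCC$ with $\vp(\d_{V_i}) = \sum_{j=1}^m c_{ji}\, \d_{V_j}$. Since $\{\d_w : w \in V\}$ is a basis and each $\d_{V_j}$ is the sum of $\d_w$ over the disjoint block $V_j$, comparing the coefficient of an arbitrary $\d_w$ in the two expressions for $\vp(\d_{V_i})$ forces $|\Ga(w) \cap V_i| = c_{ji}$ for all $w \in V_j$. Thus the number of neighbours in $V_i$ of a vertex of $V_j$ depends only on the pair $(i, j)$, which is exactly the definition of an equitable partition.

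The proof is essentially routine and there is no serious obstacle: the whole argument reduces to a change in the order of summation followed by coefficient comparison in the basis $\{\d_w : w \in V\}$. The only subtle point worth flagging is the interchange of summation in the first display, which uses undirectedness; beyond that, no spectral or character-theoretic input is needed.
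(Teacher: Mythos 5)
Your proof is correct. The paper itself gives no proof of this lemma (it simply cites \cite[Lemma 2(ii)]{E87} and \cite[Lemma 9.3.2]{GR} and remarks that the verification is routine), and your argument --- computing $\vp(\d_{V_i}) = \sum_{w\in V}|\Ga(w)\cap V_i|\,\d_w$ and comparing coefficients in the basis $\{\d_w\}$ in both directions --- is exactly the standard verification those references have in mind, including the correct observation that undirectedness (symmetry of $A$) is what lets you identify the coefficient of $\d_w$ with $|\Ga(w)\cap V_i|$.
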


In the sequel, for an equitable partition $\pi = \{V_1, \ldots, V_m\}$ we denote by $\CCC V_{\pi}$ the subspace of $\CCC V$ spanned by $\d_{V_1}, \ldots, \d_{V_m}$. Then $\dim(\CCC V_{\pi}) = m$. Denote by $\vp_{\pi}$ the restriction of $\vp$ to $\CCC V_{\pi}$. It is straightforward to verify that the matrix $A_{\pi}$ of $\vp_{\pi}$ with respect to the `standard' basis $\d_{V_1}, \ldots, \d_{V_m}$ of $\CCC V_{\pi}$ is given by 
$$
A_{\pi} = (b_{ij})_{1 \le i,j \le m},
$$ 
where as before $b_{ij} = |\Ga(u) \cap V_j|$ for $u \in V_i$.  

\begin{lemma}
\label{lem:tp}
Let $\Ga = (V, E)$ be a $d$-regular graph. A subset $C \subseteq V$ is a total perfect code in $\Ga$ if and only if  
\be
\label{eq:ns}
\vp(\d_C) = \d_V.
\ee 
\end{lemma}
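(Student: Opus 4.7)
The plan is to prove the lemma by a direct computation that unwinds the definition of $\vp$ and then rereads the result as the defining property of a total perfect code. The statement is essentially a dictionary translation between the combinatorial definition of a total perfect code (every vertex has exactly one neighbour in $C$) and the linear-algebraic condition $\vp(\d_C)=\d_V$, so no deep machinery is needed.

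First I would compute $\vp(\d_C)$ by linearity: since $\d_C=\sum_{v\in C}\d_v$, we have
$$
\vp(\d_C)\;=\;\sum_{v\in C}\vp(\d_v)\;=\;\sum_{v\in C}\sum_{u\in\Ga(v)}\d_u.
$$
Swapping the order of summation and using the fact that $\Ga$ is undirected (so $u\in\Ga(v)$ iff $v\in\Ga(u)$), the coefficient of $\d_u$ in $\vp(\d_C)$ equals $|\{v\in C:v\in\Ga(u)\}|=|\Ga(u)\cap C|$. Equivalently, using the adjacency matrix,
$$
\vp(\d_C)(u)\;=\;\sum_{v\in V}a_{uv}\d_C(v)\;=\;\sum_{v\in C}a_{uv}\;=\;|\Ga(u)\cap C|
$$
for every $u\in V$.

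Next I would compare this with $\d_V$, the all-one function, which satisfies $\d_V(u)=1$ for every $u\in V$. The equality $\vp(\d_C)=\d_V$ then reads: $|\Ga(u)\cap C|=1$ for every vertex $u$. By definition, this is exactly the condition that every vertex of $\Ga$ has precisely one neighbour in $C$, i.e.\ that $C$ is a total perfect code in $\Ga$. Both implications follow at once, completing the proof.

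The only thing worth being careful about is making the distinction between $u\in\Ga(v)$ (used in the definition of $\vp$) and $v\in\Ga(u)$ (used in the definition of a total perfect code); this is resolved immediately by the undirectedness of $\Ga$ (equivalently, the symmetry $a_{uv}=a_{vu}$ of $A$). There is no genuine obstacle: the lemma amounts to recognising that $\vp$ acts on indicator vectors as matrix multiplication by $A$, and that $A\d_C$ is the neighbourhood-count vector of $C$.
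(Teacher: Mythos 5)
Your proof is correct and follows essentially the same route as the paper: both expand $\vp(\d_C)$ by linearity to show its value at a vertex $w$ is $\sum_{v\in C}a_{wv}=|\Ga(w)\cap C|$, and then read off the equivalence with $\d_V$ directly from the definition of a total perfect code. The only difference is that you spell out the summation swap and the symmetry $a_{uv}=a_{vu}$ explicitly, which the paper leaves implicit.
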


\begin{proof}
We have $\vp(\d_C)(w) = \sum_{v \in C} \vp(\d_v)(w) = \sum_{v \in C}a_{wv} = |\Ga(w) \cap C|$ for $w \in V$. From this and the definition of a total perfect code the result follows immediately. 
\qed
\end{proof}

We now prove the following counterpart of \cite[Proposition 3]{E87} by using a similar approach. 

\begin{theorem}
\label{thm:equ}
Let $\Ga = (V, E)$ be a $d$-regular graph and $C$ a total perfect code in $\Ga$. Then for every equitable partition $\pi = \{V_1, \ldots, V_m\}$ of $\Ga$ there exists a vector $(k_1, \ldots, k_m)^T \in \QQQ^m$ such that 
\be
\label{eq:eigen}
|V_i \cap C| = \left(\frac{1}{d} + k_i\right) |V_i|,\;\, i = 1, \ldots, m
\ee  
\be
\label{eq:eigen1}
A_{\pi} (k_1, \ldots, k_m)^T = (0, \ldots, 0)^T.
\ee
\end{theorem}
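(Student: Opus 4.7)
The plan is to exploit the orthogonal projection from $\CCC V$ onto $\CCC V_\pi$, in the spirit of the character-theoretic projection used in the proof of \cite[Proposition 3]{E87}. First I set $x_i = |V_i \cap C|$ and $c_i = x_i/|V_i|$ for $i = 1, \ldots, m$, and take $k_i = c_i - 1/d$, so that $(\ref{eq:eigen})$ holds automatically and $(k_1, \ldots, k_m)^T \in \QQQ^m$. The task is then to verify $(\ref{eq:eigen1})$.

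Equip $\CCC V$ with the standard Hermitian inner product $\la f, g\ra = \sum_{v \in V} f(v) \overline{g(v)}$. The vectors $\d_{V_1}, \ldots, \d_{V_m}$ are pairwise orthogonal with $\la \d_{V_i}, \d_{V_i}\ra = |V_i|$, so the orthogonal projection $P: \CCC V \rightarrow \CCC V_\pi$ satisfies $P(\d_C) = \sum_{i=1}^m c_i \d_{V_i}$. Because $\Ga$ is undirected, the matrix $A$ is symmetric and hence $\vp$ is self-adjoint. By Lemma \ref{lem:equit}, $\CCC V_\pi$ is $\vp$-invariant, and self-adjointness then forces $\CCC V_\pi^\perp$ to be $\vp$-invariant as well; consequently $P$ and $\vp$ commute.

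Now Lemma \ref{lem:tp} gives $\vp(\d_C) = \d_V$, and since $\d_V = \sum_{i=1}^m \d_{V_i} \in \CCC V_\pi$ we have $P(\vp(\d_C)) = \d_V$. Combining this with $P\vp = \vp P$ yields $\vp(P(\d_C)) = \d_V$. A direct consequence of the equitable property is $\vp(\d_{V_i}) = \sum_j b_{ji} \d_{V_j}$, since each $u \in V_j$ has exactly $b_{ji}$ neighbours in $V_i$. Expanding $\vp(P(\d_C)) = \d_V$ in the basis $\d_{V_1}, \ldots, \d_{V_m}$ via this formula rewrites the identity as the matrix equation $A_\pi (c_1, \ldots, c_m)^T = (1, \ldots, 1)^T$. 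Since each vertex of $V_j$ has $d$ neighbours in total, the rows of $A_\pi$ sum to $d$, so $A_\pi (\tfrac{1}{d}, \ldots, \tfrac{1}{d})^T = (1, \ldots, 1)^T$ as well, and subtraction delivers $(\ref{eq:eigen1})$.

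The only step that requires genuine care is the commutation $P\vp = \vp P$, which rests on $A$ being symmetric (i.e., on $\Ga$ being undirected); once this is in hand the rest is routine bookkeeping with the quotient matrix $A_\pi$.
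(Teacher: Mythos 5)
Your proof is correct and follows essentially the same route as the paper: both project $\d_C$ onto $\CCC V_\pi$ (your orthogonal projection coincides with the paper's averaging map $p$), use the commutation of this projection with $\vp$ together with $\vp(\d_C)=\d_V$ and $\vp(\tfrac1d\d_V)=\d_V$, and read off \eqref{eq:eigen} and \eqref{eq:eigen1} by comparing coefficients in the basis $\d_{V_1},\ldots,\d_{V_m}$. The only cosmetic difference is that you justify $P\vp=\vp P$ via self-adjointness of $A$ plus $\vp$-invariance of $\CCC V_\pi$, whereas the paper verifies $PA=AP$ directly from the equitable property, citing \cite{E87}.
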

 
\begin{proof}
Define
$$
p: \CCC V \rightarrow \CCC V_{\pi}, \quad f \mapsto p(f) := \sum_{i=1}^{m} \left(\frac{\sum_{v \in V_i}f(v)}{|V_i|}\right) \d_{V_i}. 
$$
Then $p$ is a linear mapping from $\CCC V$ to $\CCC V_{\pi}$. 
Denote by $P$ the matrix of $p$ with respect to the `standard' basis $\{\d_v: v \in V\}$. It is straightforward to verify that the $(u, v)$-entry of $P$ is given by $P_{uv} = \d_{V_u}(v)/|V_u|$, where $V_u$ is the unique part $V_i$ of $\pi$ containing $u$. Using this and the assumption that $\pi$ is equitable, one can verify (see \cite{E87}) that $PA = AP$. In other words, $p \circ \vp = \vp \circ p$.    

Since $C$ is a total perfect code, we have $\vp(\d_C) = \d_V$ by (\ref{eq:ns}). This together with $p \circ \vp = \vp \circ p$ implies $(\vp \circ p) (\d_C) = (p \circ \vp)(\d_C) = p(\d_V) = \d_V$. Since $\Ga$ is $d$-regular, we have $\vp\left(\frac{1}{d}\d_V\right) = \d_V$ and hence $(\vp \circ p)\left(\frac{1}{d}\d_V\right) = (p \circ \vp)\left(\frac{1}{d}\d_V\right) = p(\d_V) = \d_V$. By the definition of $p$, we have $p(\d_C) - p\left(\frac{1}{d}\d_V\right) \in \CCC V_{\pi}$. Therefore, $\vp_{\pi} \left(p(\d_C) - p\left(\frac{1}{d}\d_V\right)\right) = \mathbf{0}$, that is, $p(\d_C) - p\left(\frac{1}{d}\d_V\right) \in \Ker(\vp_{\pi})$. On the other hand, since $p(\d_C) - p\left(\frac{1}{d}\d_V\right) \in \CCC V_{\pi}$, there exists $(k_1, \ldots, k_m)^T \in \CCC^m$ such that $p(\d_C) - p\left(\frac{1}{d}\d_V\right) = \sum_{i=1}^m k_i \d_{V_i}$. Since this vector is in $\Ker(\vp_{\pi})$ and $\vp_{\pi}$ has matrix $A_{\pi} = (b_{ij})_{1 \le i,j \le m}$ with respect to the basis $\d_{V_1}, \ldots, \d_{V_m}$, it follows that $(k_1, \ldots, k_m)^T$ satisfies (\ref{eq:eigen1}). We have $p(\d_C) = \sum_{i=1}^{m} \left(\frac{|V_i \cap C|}{|V_i|}\right) \d_{V_i}$ and $p\left(\frac{1}{d}\d_V\right) = \frac{1}{d}\d_V = \frac{1}{d} \sum_{i=1}^m \d_{V_i}$. Thus $\sum_{i=1}^{m} \left(\frac{|V_i \cap C|}{|V_i|}\right) \d_{V_i} - \frac{1}{d} \sum_{i=1}^m \d_{V_i} = \sum_{i=1}^m k_i \d_{V_i}$. Since $\d_{V_1}, \ldots, \d_{V_m}$ are independent, we have $\frac{|V_i \cap C|}{|V_i|} - \frac{1}{d} = k_i$ for each $i$. It is obvious that all coordinates $k_i$ are rationals. 
\qed
\end{proof}

\begin{corollary}
\label{cor:equ}
Let $\Ga = (V, E)$ be a $d$-regular graph. 
\begin{itemize}
\item[\rm (a)] If $\Ga$ admits a total perfect code, then for any equitable partition $\pi = \{V_1, \ldots, V_m\}$ of  $\Ga$, either $0$ is an eigenvalue of $A_{\pi}$ with an eigenvector $(k_1, \ldots, k_m)$ giving by (\ref{eq:eigen}), or $d$ divides $|V_i|$ for each $i = 1, \ldots, m$.  
\item[\rm (b)] If $\Ga$ admits a total perfect code and $d \ge 2$, then $0$ is an eigenvalue of $\Ga$ (obtained from the trivial equitable partition $\{\{v\}: v \in V\}$) and $(k_v)_{v \in V}$ is a corresponding eigenvector, where $k_v = 1-\frac{1}{d}$ if $v \in C$ and $k_v = -\frac{1}{d}$ if $v \notin C$. 
\end{itemize}
\end{corollary}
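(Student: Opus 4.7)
The plan is to derive both parts directly from Theorem \ref{thm:equ}.

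For part (a), that theorem furnishes, for any equitable partition $\pi = \{V_1,\ldots,V_m\}$ of $\Ga$, a rational vector $(k_1,\ldots,k_m)^T$ satisfying $A_\pi (k_1,\ldots,k_m)^T = \mathbf{0}$ and $|V_i \cap C| = (1/d + k_i)|V_i|$ for each $i$. I would then split into two cases according to whether this vector is zero. If it is nonzero, then by definition it is an eigenvector of $A_\pi$ for the eigenvalue $0$, and (\ref{eq:eigen}) provides the required description of its entries. If on the other hand $k_i = 0$ for every $i$, then $|V_i \cap C| = |V_i|/d$, and since the left-hand side is a nonnegative integer, $d$ must divide $|V_i|$ for each $i$, giving the second alternative.

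For part (b), I would apply (a) to the trivial partition $\pi = \{\{v\} : v \in V\}$, which is vacuously equitable and for which $A_\pi$ coincides with the adjacency matrix $A$ of $\Ga$. Because each part has size $1$ and $d \ge 2$, the divisibility alternative $d \mid 1$ is impossible, so the eigenvector alternative must hold. Substituting $|V_v| = 1$ into (\ref{eq:eigen}) gives precisely $k_v = 1 - 1/d$ when $v \in C$ and $k_v = -1/d$ when $v \notin C$; this vector is nonzero (since $C$ is nonempty and $1 - 1/d \ne 0$), so it legitimately witnesses $0$ as an eigenvalue of $\Ga$. As an independent sanity check, one can verify it directly: by Lemma \ref{lem:tp} and $d$-regularity, $A(\d_C - d^{-1}\d_V) = \vp(\d_C) - d^{-1}\vp(\d_V) = \d_V - \d_V = \mathbf{0}$.

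There is no substantial obstacle: both parts reduce almost immediately to Theorem \ref{thm:equ}. The only point requiring mild attention is the clean dichotomy in (a) — the output vector is either nonzero (yielding an honest eigenvector of $A_\pi$) or identically zero (forcing $|V_i \cap C| = |V_i|/d \in \ZZZ$, hence $d \mid |V_i|$ for each $i$) — and then checking in (b) that singletons rule out the divisibility branch for $d \ge 2$.
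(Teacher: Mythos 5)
Your argument is correct and follows essentially the same route as the paper, which derives (a) immediately from Theorem \ref{thm:equ} and obtains (b) by applying (a) to the trivial partition and noting that $d \ge 2$ does not divide $1$. You simply make explicit the zero/nonzero dichotomy for the vector $(k_1,\ldots,k_m)$ and add a direct verification of the eigenvector in (b), both of which the paper leaves implicit.
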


\begin{proof}
The truth of (a) follows from Theorem \ref{thm:equ} immediately. Applying (a) to the trivial equitable partition $\{\{v\}: v \in V\}$, we obtain (b) by noting that $d \ge 2$ is not a divisor of $1$. 
\qed
\end{proof}

Part (b) in Corollary \ref{cor:equ} is parallel to the well known result \cite[Lemma 9.3.4]{GR} that a regular graph admitting a perfect 1-code should have $-1$ as an eigenvalue.

\subsection{A necessary condition}
\label{subsec:a}

The purpose of this subsection is to establish the following analogy of \cite[Theorem 6]{E87} by using a similar approach. Denote by $d_{\chi}$ the degree of a character $\chi$ of a group. As $\chi$ is a class function, for a conjugacy class $K$ we write $\chi(K) =\chi(x)$ where $x \in K$. 

\begin{theorem}
\label{thm:nec-a}
Let $G$ be a group and $S$ a union of $s$ conjugacy classes of $G$ with $1 \not \in S$ and $S^{-1} = S$. Let $\QQ$ be the set of irreducible characters $\chi$ of $G$ such that $\sum_{K} \frac{|K|\chi(K)}{d_{\chi}} = 0$, where $K$ runs over all conjugacy classes of $G$ contained in $S$. If $\Cay(G, S)$ admits a total perfect code, then
\begin{itemize}
\item[\rm (a)] $\sum_{\chi \in \QQ} d_{\chi}^2 \ge |S| - 1$;
\item[\rm (b)] $|\QQ| \ge s$.
\end{itemize}
\end{theorem}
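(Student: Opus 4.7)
The plan is to exploit the equitable partition $\pi = \{K_1, \ldots, K_m\}$ of $\Ga := \Cay(G,S)$ by the conjugacy classes of $G$. Since $S$ is a union of conjugacy classes, conjugation by any $g \in G$ is an automorphism of $\Ga$, so $\pi$ is equitable. Identifying $\CCC V_\pi$ with the subalgebra of class functions in $\CCC[G]$ (spanned by the class sums), $\vp_\pi$ becomes multiplication by $\bar{S} := \sum_{s \in S}s$, and in the basis of central primitive idempotents $\{e_\chi\}$ it is diagonal with eigenvalue $\lambda_\chi = \sum_{K \subseteq S}|K|\chi(K)/d_\chi$; on all of $\CCC V \cong \CCC[G]$ the operator $\vp$ acts as $\lambda_\chi I$ on the $\chi$-isotypic component of dimension $d_\chi^2$. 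Hence $\dim\ker\vp = \sum_{\chi \in \QQ}d_\chi^2$ and $\dim\ker\vp_\pi = |\QQ|$, so both parts reduce to producing enough linearly independent elements of the relevant $0$-eigenspace.

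For part (a) I would apply Lemma \ref{lem:partition}(b) to a total perfect code $C$: the $|S|$ right translates $\{Cg : g \in S\}$ partition $V(\Ga)$ into total perfect codes, so Lemma \ref{lem:tp} gives $\vp(\d_{Cg}) = \d_V$ for every $g \in S$, and therefore $w_g := \d_{Cg} - (1/|S|)\d_V \in \ker\vp$. These $|S|$ vectors satisfy $\sum_g w_g = 0$; for independence of the rest, any relation $\sum_g \alpha_g w_g = 0$ gives $\sum_g \alpha_g \d_{Cg} = \bigl((\sum_g \alpha_g)/|S|\bigr)\d_V$, and evaluating at a vertex in $Ch$ (which belongs to the unique part $Ch$) forces $\alpha_h = (\sum_g \alpha_g)/|S|$ for every $h \in S$, so all $\alpha_h$ are equal and the sum-to-zero relation is the only one. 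This yields $|S|-1$ linearly independent vectors in $\ker\vp$ and the inequality of (a).

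For part (b) I would project to class functions via $p : \CCC V \to \CCC V_\pi$ and set $\bar{C} := p(\d_C)$, whose value on $K_i$ is $|C \cap K_i|/|K_i|$. Because $\bar{S}$ is central it commutes with $p$, so $\vp(\d_C) = \d_V$ projects to $\bar{S}\bar{C} = \d_V$ in $\CCC V_\pi$. Writing $\tilde{K}_1,\ldots,\tilde{K}_s$ for the conjugacy classes contained in $S$, centrality of $\bar{\tilde{K}_j}$ yields $\bar{S}(\bar{\tilde{K}_j}\bar{C}) = \bar{\tilde{K}_j}\d_V = |\tilde{K}_j|\d_V$, so the elements
$$
u_j := \bar{\tilde{K}_j}\bar{C} - \frac{|\tilde{K}_j|}{|S|}\d_V,\qquad j = 1,\ldots,s,
$$
and $v_C := \bar{C} - (1/|S|)\d_V$ all lie in $\ker\vp_\pi$. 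The $u_j$ satisfy $\sum_j u_j = \bar{S}\bar{C} - \d_V = 0$ and therefore span at most an $(s-1)$-dimensional subspace on their own; the goal is to show that the $s+1$ elements $\bar{C}, \bar{\tilde{K}_1}\bar{C},\ldots,\bar{\tilde{K}_s}\bar{C}$ of $\CCC V_\pi$ are linearly independent, equivalently that $v_C \notin \span\{u_1,\ldots,u_s\}$. Together with the identity $\d_V = \sum_j \bar{\tilde{K}_j}\bar{C}$, this independence supplies $s$ linearly independent vectors in $\ker\vp_\pi$, giving $|\QQ| = \dim\ker\vp_\pi \ge s$.

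The hard part is this last independence claim. In the $\{e_\chi\}$ basis $v_C = \sum_{\chi \in \QQ}\gamma_\chi e_\chi$ and $u_j = \sum_{\chi \in \QQ}(|\tilde{K}_j|\chi(\tilde{K}_j)/d_\chi)\gamma_\chi e_\chi$, where $\gamma_\chi := (1/d_\chi)\sum_{c \in C}\chi(c)$, so a purported dependence $v_C = \sum_j \beta_j u_j$ reduces to the linear system $\sum_j \beta_j |\tilde{K}_j|\chi(\tilde{K}_j)/d_\chi = 1$ (one equation per $\chi \in \QQ \cup \{1\}$ with $\gamma_\chi \ne 0$) in $s$ unknowns. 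When enough characters have $\gamma_\chi \ne 0$ the system is overdetermined and column orthogonality of the character table of $G$ rules out any solution; the delicate case is when many $\gamma_\chi$ vanish, which I would handle by replacing $C$ with a suitable right translate $Cg$ (still a total perfect code with the same $\QQ$ but generally a different vanishing pattern for $(1/d_\chi)\sum_c \chi(cg)$) and rerunning the argument, so that every $\chi \in \QQ$ eventually contributes an equation. This parallels the strategy of \cite[Theorem 7]{E87}, modified for the defining identity $\bar{S}\bar{C} = \d_V$ of a total perfect code in place of $(1+\bar{S})\bar{C} = \d_V$ for perfect $1$-codes.
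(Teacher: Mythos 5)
Your part (a) is correct and is essentially the paper's own argument: the $|S|$ right translates $Cg$, $g \in S$, partition $G$ (note this is Lemma \ref{lem:partition}(c), which needs $S$ closed under conjugation; part (b) of that lemma concerns the left translates $gC$), each satisfies $\vp(\d_{Cg}) = \d_G$ by Lemma \ref{lem:tp}, so $\Ker(\vp)$ contains an $(|S|-1)$-dimensional subspace, and the isotypic decomposition of the regular representation identifies $\dim(\Ker(\vp))$ with $\sum_{\chi \in \QQ} d_{\chi}^2$.

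Part (b), however, has a genuine gap. Your entire argument reduces to the linear independence of the $s+1$ class functions $\bar{C}, \bar{\tilde{K}_1}\bar{C}, \ldots, \bar{\tilde{K}_s}\bar{C}$, which you explicitly flag as ``the hard part'' and do not prove. Neither of the devices you offer closes it: the claim that ``column orthogonality of the character table rules out any solution'' is asserted only when ``enough'' of the coefficients $\gamma_{\chi}$ are nonzero, with no criterion for enough and no actual computation; and the fallback of replacing $C$ by a translate $Cg$ to alter the vanishing pattern of the $\gamma_{\chi}$ is not shown to terminate or to eventually make every $\chi \in \QQ$ contribute an equation. As written, (b) rests entirely on an unproven assertion. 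The paper closes exactly this point by a combinatorial count rather than a character computation: after normalizing $1 \in C$ via Lemma \ref{lem:partition}(a) and choosing one representative $g_j \in K_j$ for each class $K_j \subseteq S$ together with $g_{s+1} = 1$, it proves the key identity that $|K_i \cap Cg_j|$ equals $1$ if $i = j \le s$ and $0$ if $i \ne j$ with $i, j \le s$ --- for if $x = yg_j \in K_i$ with $y \in C$, then $x \in S$ is adjacent to both $1$ and $y$ in $C$ (using closure of $S$ under conjugation), forcing $y = 1$. This makes the top $(s+1) \times (s+1)$ block of the matrix $(|K_i \cap Cg_j|)$ upper unitriangular, so the full matrix has rank $s+1$; Theorem \ref{thm:equ} then writes that matrix as a rank-one matrix plus a product factoring through $(|K_i|\chi(K_i))_{1 \le i \le m,\, \chi \in \QQ}$, whose rank must therefore be at least $s$, giving $|\QQ| \ge s$. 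Some such concrete rank argument (or an honest proof of your independence claim) is required; without it your part (b) is incomplete.
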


In the case when $G$ is abelian, it has exactly $|G|$ irreducible characters all of which are linear. Thus all $d_{\chi} = 1$ and $\QQ$ consists of those $\chi$ such that $\chi(S) := \sum_{g \in S} \chi(g) = 0$. Since $G$ is abelian, we have $|S| = s$ and $\chi(S)$, with $\chi$ running over all irreducible characters of $G$, are precisely the eigenvalues of $\Cay(G, S)$. Thus (a) and (b) above yield $|\QQ| \ge s-1$ and $|\QQ| \ge s$, respectively. Therefore, Theorem \ref{thm:nec-a} implies:

\begin{corollary}
\label{cor:nec-a}
Let $G$ be an abelian group and $S$ a subset of $G$ with $1 \not \in S$ and $S^{-1} = S$. Then $\Cay(G, S)$ admits a total perfect code only if the multiplicity of $0$ as an eigenvalue of $\Cay(G, S)$ is at least $|S|$ (that is, $G$ has at least $|S|$ irreducible characters $\chi$ such that $\sum_{g \in S} \chi(g) = 0$).
\end{corollary}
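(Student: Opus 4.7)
The plan is to derive this corollary as a direct specialization of Theorem \ref{thm:nec-a}(b) to the abelian setting, invoking the classical spectral description of abelian Cayley graphs to identify the set $\QQ$ with the $0$-eigenspace of $\Cay(G,S)$. Two routine simplifications drive the argument: in an abelian group every conjugacy class is a singleton, and every irreducible character is linear.

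First I would record the simplification of $\QQ$. Since each conjugacy class $K \subseteq S$ is a singleton $\{g\}$, the number of classes inside $S$ equals $s = |S|$, and the defining condition
$$\sum_{K \subseteq S} \frac{|K|\chi(K)}{d_{\chi}} = 0$$
collapses (using $|K| = 1$ and $d_{\chi} = 1$) to $\sum_{g \in S}\chi(g) = 0$, i.e.\ $\chi(S) = 0$. Thus in the abelian case
$$\QQ = \{\chi \in \widehat{G} : \chi(S) = 0\},$$
where $\widehat{G}$ is the full character group of $G$.

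Next I would identify $|\QQ|$ with a spectral multiplicity. Since $G$ is abelian, the $|G|$ irreducible characters $\chi \in \widehat{G}$, viewed as vectors $(\chi(x))_{x \in G}$, form an orthogonal eigenbasis of the adjacency matrix $A$ of $\Cay(G,S)$: a short calculation shows
$$(A\chi)(x) = \sum_{g \in S}\chi(gx) = \chi(S)\,\chi(x),$$
so $\chi$ is an eigenvector with eigenvalue $\chi(S)$. Because these $|G|$ eigenvectors span $\CCC^{|G|}$, the multiplicity of $0$ as an eigenvalue of $\Cay(G,S)$ equals the number of irreducible characters $\chi$ with $\chi(S) = 0$, which is exactly $|\QQ|$.

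Finally I would apply Theorem \ref{thm:nec-a}(b): if $\Cay(G,S)$ admits a total perfect code then $|\QQ| \ge s = |S|$, and by the previous step this says the multiplicity of the eigenvalue $0$ of $\Cay(G,S)$ is at least $|S|$, which is the claim. There is no real obstacle here; the only step that requires any verification beyond bookkeeping is the eigenvector calculation for $A$, and that is standard.
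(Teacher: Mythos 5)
Your proposal is correct and follows essentially the same route as the paper: specialize Theorem \ref{thm:nec-a}(b) to the abelian case, where every conjugacy class is a singleton and every $d_\chi = 1$, so that $\QQ = \{\chi : \sum_{g\in S}\chi(g) = 0\}$ and $|\QQ|$ equals the multiplicity of the eigenvalue $0$ since the characters form an eigenbasis of the adjacency matrix with eigenvalues $\chi(S)$. The only difference is that you spell out the eigenvector computation, which the paper takes as known.
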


The rest of this subsection is devoted to the proof of Theorem \ref{thm:nec-a}. We assume that $G$ and $S$ are as in Theorem \ref{thm:nec-a} and denote $\Ga = \Cay(G, S)$. To be explicit we assume 
$$
S = \cup_{i=1}^s K_i,
$$ 
where each $K_i$ is a conjugacy class of $G$ with $K^{-1}_{i}$ also contained in $S$. Without loss of generality we may assume that the set of all conjugacy classes of $G$ is
$$
\pi = \{K_1, \ldots, K_s, K_{s+1}, \ldots, K_m\}.
$$
Then $\CCC G_{\pi}$ is the vector space of class functions of $G$. Since $S$ is closed under conjugation, the inner automorphism group $\Inn(G)$ of $G$ is a subgroup of $\Aut(\Ga)$ with respect to its natural action on $G$. Since the orbits of $\Inn(G)$ on $G$ are precisely the conjugacy classes of $G$, it follows that $\pi$ is an equitable partition of $\Ga$. We will apply Theorem \ref{thm:equ} to this particular partition in the proof of Theorem \ref{thm:nec-a}.  

Let 
$$
\l_G: G \rightarrow \Sym(\CCC G),\quad g \mapsto \l_G(g)
$$
be the left regular permutation representation of $G$, defined by
$$
(\l_G(g)f)(x) = f(g^{-1}x),\;\, f \in \CCC G,\;\, x \in G.
$$
It can be verified that
$$
\l_G(g)\d_v = \d_{gv},\;\,g, v \in G.
$$
Let $\vp: \CCC G \rightarrow \CCC G$ be the adjacency linear mapping as in (\ref{eq:vphi}) for $V=G$ and $\Ga = \Cay(G, S)$. 

\begin{lemma}
\label{lem:vphi}
$\varphi = \sum_{g \in S} \l_G(g)$.
\end{lemma}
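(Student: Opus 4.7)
The plan is to verify the identity by evaluating both sides on the standard basis $\{\d_v : v \in G\}$ of $\CCC G$ and invoking linearity. Since $\vp$ and each $\l_G(g)$ are both linear maps on $\CCC G$, establishing agreement on a basis suffices.

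First I would unpack the neighbourhood of a vertex $v$ in $\Ga = \Cay(G, S)$. By definition, $u$ is adjacent to $v$ iff $uv^{-1} \in S$, which is equivalent to $u = sv$ for some $s \in S$. Hence $\Ga(v) = Sv = \{sv : s \in S\}$, and the elements $sv$, $s \in S$, are pairwise distinct because left multiplication in $G$ is injective. Applying the definition of $\vp$ in (\ref{eq:vphi}) then gives
\[
\vp(\d_v) = \sum_{u \in \Ga(v)} \d_u = \sum_{s \in S} \d_{sv}.
\]

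Next I would compute the right-hand side on the same basis vector. Using the stated formula $\l_G(g)\d_v = \d_{gv}$ (which itself follows directly from $(\l_G(g)f)(x) = f(g^{-1}x)$ applied to $f = \d_v$), I obtain
\[
\Bigl(\sum_{g \in S} \l_G(g)\Bigr)\d_v = \sum_{g \in S} \l_G(g)\d_v = \sum_{g \in S} \d_{gv}.
\]
Relabelling $g$ as $s$, this coincides with the expression for $\vp(\d_v)$ above.

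Since $\vp$ and $\sum_{g \in S} \l_G(g)$ are linear maps on $\CCC G$ that agree on every element of a basis, they are equal as linear operators on $\CCC G$, completing the proof. There is no real obstacle here: the only content is correctly identifying $\Ga(v) = Sv$ from the definition of the Cayley graph (noting that this uses the convention $xy^{-1} \in S$), after which the two sides match term by term.
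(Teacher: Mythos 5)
Your proof is correct and follows essentially the same route as the paper: evaluate $\vp$ and $\sum_{g \in S}\l_G(g)$ on the standard basis $\{\d_v : v \in G\}$, use $\Ga(v) = Sv$ and $\l_G(g)\d_v = \d_{gv}$ to see the two expressions coincide, and conclude by linearity. Your version just spells out the identification $\Ga(v)=Sv$ a little more explicitly than the paper does.
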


\begin{proof}
We have $(\sum_{g \in S} \l_G(g))(\d_v) = \sum_{g \in S} \l_G(g)(\d_v) = \sum_{g \in S} \d_{gv} = \sum_{u \in \Ga(v)} \d_u = \varphi(\d_v)$ for any $v \in G$. Since $\{\d_v: v \in G\}$ is a basis of $\CCC G$, the result follows. 
\qed
\end{proof}

We remark that Lemma \ref{lem:vphi} holds for any $S \subset G$ that is not necessarily a union of conjugacy classes of $G$. 

\begin{lemma}
\label{lem:ev}
(\cite[Lemma 5]{E87}) 
The irreducible characters of $G$ (i) constitute a basis of $\CCC G_{\pi}$, and (ii) are eigenvectors of $\varphi_{\pi}$. More explicitly, for any irreducible character $\chi$ of $G$, 
\be
\label{eq:ir-eigen}
\varphi_{\pi}(\chi) = \left(\sum_{i=1}^s \frac{|K_i| \chi(K_i)}{d_{\chi}}\right) \chi.
\ee
\end{lemma}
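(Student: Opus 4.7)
The plan is to prove part (i) by a dimension count and linear independence argument, then prove part (ii) by computing $\varphi(\chi)$ directly via Lemma~\ref{lem:vphi} and using the fact that class sums act as scalars on irreducible representations.

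For part (i), I would first observe that every irreducible character of $G$ is a class function, hence constant on each $K_j$, and therefore lies in $\CCC G_\pi = \mathrm{span}\{\delta_{K_1}, \ldots, \delta_{K_m}\}$. The number of irreducible characters equals the number of conjugacy classes, which is $m = \dim(\CCC G_\pi)$. By the first orthogonality relation (under the standard Hermitian inner product on $\CCC G$), distinct irreducible characters are orthogonal and in particular linearly independent, so they form a basis of $\CCC G_\pi$.

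For part (ii), I would view $\chi$ as the element $\sum_{v \in G}\chi(v)\delta_v \in \CCC G$ and apply Lemma~\ref{lem:vphi}. Since $(\lambda_G(g)\chi)(x) = \chi(g^{-1}x)$, we get
\[
\varphi(\chi)(x) = \sum_{g \in S}\chi(g^{-1}x) = \sum_{i=1}^{s}\sum_{g \in K_i}\chi(g^{-1}x).
\]
For a single conjugacy class $K$, the class sum $\hat{K} = \sum_{g \in K} g$ lies in the centre of $\CCC G$, so in the irreducible representation $\rho_\chi$ affording $\chi$ it acts as the scalar $|K|\chi(K)/d_\chi$. Taking the trace of $\rho_\chi(\hat{K^{-1}})\rho_\chi(x)$ and noting that $|K^{-1}| = |K|$, this yields the identity
\[
\sum_{g \in K}\chi(g^{-1}x) \;=\; \frac{|K|\,\chi(K^{-1})}{d_\chi}\,\chi(x).
\]
Summing over the classes $K_1,\ldots,K_s$ and using the fact that $S = S^{-1}$ forces the map $K_i \mapsto K_i^{-1}$ to be a permutation of $\{K_1,\ldots,K_s\}$, the sum $\sum_i |K_i|\chi(K_i^{-1})/d_\chi$ equals $\sum_i |K_i|\chi(K_i)/d_\chi$, giving \eqref{eq:ir-eigen}. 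Since $\chi \in \CCC G_\pi$, the restriction $\varphi_\pi$ has the same effect as $\varphi$ on $\chi$.

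The only real obstacle is the bookkeeping with the inverse class $K^{-1}$: the natural computation using $\lambda_G(g)$ produces $\chi(g^{-1}x)$, which naturally reparameterises via the class sum of $K^{-1}$ rather than $K$. The hypothesis $S = S^{-1}$ (equivalently, $\{K_i^{-1}\}$ is a permutation of $\{K_i\}$) is exactly what is needed to identify the resulting scalar with the one stated in \eqref{eq:ir-eigen}. No further machinery is required beyond Lemma~\ref{lem:vphi} and the classical fact that central elements of $\CCC G$ act by scalars on irreducibles.
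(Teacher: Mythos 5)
Your proof is correct and follows the same route the paper intends: the paper does not write out a proof but delegates part (i) to the standard count of irreducible characters versus conjugacy classes and part (ii) to \cite{E87} via \cite[(5.4)]{Feit}, which is precisely the fact you invoke that a class sum $\hat{K}$ acts on the irreducible representation affording $\chi$ as the scalar $|K|\chi(K)/d_{\chi}$. Your handling of the $K\mapsto K^{-1}$ reindexing using $S=S^{-1}$ is the right bookkeeping and matches the paper's standing assumption that each $K_i^{-1}$ is again one of the $K_1,\ldots,K_s$.
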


The truth of (i) is a well known result in group theory as $\pi$ is the partition of $G$ into conjugacy classes. Part (ii) was proved in \cite{E87} with the help of \cite[(5.4)]{Feit}. 

\medskip
\begin{proof}\textbf{of Theorem \ref{thm:nec-a}}~
Suppose that $C$ is a total perfect code in $\Ga$. By Lemma \ref{lem:partition}(a), we may assume $1 \in C$ without loss of generality. Since $S$ is closed under conjugation, by Lemma \ref{lem:partition}(c), $\{Cg: g \in S\}$ is a partition of $G$, and hence $\d_{Cg}, g \in S$ are independent vectors of $\CCC G$. On the other hand, by Lemma \ref{lem:partition}(a) each $Cg$ with $g \in S$ is a total perfect code of $\Ga$. Thus by Lemma \ref{lem:tp}, $f = \d_{Cg}, g \in S$ are solutions to the linear equation $\vp(f) = \d_G$. Since these are $|S|$ independent solutions, the homogeneous equation $\vp(f) = 0$ has at least $|S|-1$ independent solutions. In other words, 
\be
\label{eq:ker}
\dim(\Ker(\vp)) \ge |S| - 1.
\ee

We now compute $\dim(\Ker(\vp))$ by way of the irreducible characters of $G$. Let $\chi_1, \ldots, \chi_m$ be such characters (see Lemma \ref{lem:ev}(i)). It is well known that in the decomposition of the regular representation into a direct sum of irreducible representations, the number of each irreducible representation is equal to its degree. Thus, by Lemma \ref{lem:vphi} and (\ref{eq:ir-eigen}), we obtain
\be
\label{eq:dec}
\vp = \bigoplus_{W}\left(\sum_{i=1}^s \frac{|K_i|}{d_{\chi_W}}\chi_W(K_i)\right) \mathrm{Id}_W,
\ee
where the direct sum runs over all irreducible representations $(\rho_W, W)$ in the decomposition of $\l_G$ (into a direct sum of irreducible representations), and $\mathrm{Id}_W$ is the identity mapping from $W$ to itself. Therefore, $\dim(\Ker(\vp)) = \sum_{\chi \in \QQ} d_{\chi}^2$. This together with (\ref{eq:ker}) yields $\sum_{\chi \in \QQ} d_{\chi}^2 \ge |S| - 1$ as claimed in (a). 

It remains to prove (b). We may assume $K_{s+1} = \{1\}$ without loss of generality. Choose an arbitrary element $g_j \in K_j$, $1 \le j \le s+1$. (Note that $g_{s+1} = 1$.) By Lemma \ref{lem:partition}(a), each $Cg_{j}$ is a total perfect code in $\Ga$. 

We now prove that the rank of the matrix $(|K_i \cap Cg_j|)_{1 \le i \le m, 1 \le j \le s+1}$ is equal to $s+1$. We show first that 
\be
\label{eq:mij}
|K_i \cap Cg_j| = \left\{ 
\begin{array}{ll}
1, & \mbox{\rm if $1 \le i=j \le s$;} \\ [0.1cm]
0, & \mbox{\rm if $1 \le i \ne j \le s$.}\\
\end{array}
\right.
\ee
Suppose that $K_i \cap Cg_j \ne \emptyset$, where $1 \le i, j \le s$. Then there exist $x \in K_i$ and $y \in C$ such that $x = yg_j$. As $K_i \subseteq S$ and $1 \notin S$, we have $1 \ne x \in S$. Moreover, since $S$ is closed under conjugation and $g_j \in S$, we have $xy^{-1} = yg_j y^{-1} \in S$. Hence $x$ is adjacent to $1$ and $y$ in $\Ga$. Since $1, y \in C$ and $C$ is a total perfect code, it follows that $y = 1$. In particular, $x = g_j \in K_i \cap K_j$. Thus $i = j$ and moreover the only possible common element of $K_i$ and $Cg_i$ is $g_i$. On the other hand, since $1 \in C$, we do have $g_i \in K_i \cap Cg_i$. Therefore, $K_i \cap Cg_i = \{g_i\}$ and \eqref{eq:mij} is proved. 
 
Now suppose $1 \le i \le s+1$ and $j = s+1$, and consider $|K_i \cap Cg_{s+1}| = |K_i \cap C|$ (as $g_{s+1} = 1$). As $K_{s+1} = \{1\}$ and $1 \in C$, we have $|K_{s+1} \cap Cg_{s+1}| = 1$. Since $\Ga(1) = S$ and $1 \in C$ is adjacent to exactly one vertex in $C$, we have $|S \cap C| = 1$. This together with $S = \cup_{i=1}^s K_i$ implies that there exists a unique $i^*$ with $1 \le i^* \le s$ such that $|K_{i^*} \cap C| = 1$ and $|K_{i} \cap C| = 0$ for $1 \le i \le s$ with $i \ne i^*$. Combining this with \eqref{eq:mij}, we obtain that the first $s+1$ rows of $(|K_i \cap Cg_j|)_{1 \le i \le m, 1 \le j \le s+1}$ form the following submatrix:
$$
\bmat{I_{s} & e_{i^*}^T \\ 0 & 1},
$$
where $I_{s}$ is the $s \times s$ identity matrix and $e_{i^*} = (0, \ldots, 0, 1, 0,  \ldots, 0)$ with $1$ in the $i^*$th coordinate. In particular, the rank of $(|K_i \cap Cg_j|)_{1 \le i \le m, 1 \le j \le s+1}$ is equal to $s+1$ as claimed. 

By (\ref{eq:ir-eigen}), $f \in \Ker(\vp_{\pi})$ $\Leftrightarrow$ $f = \sum_{\chi \in \QQ} a_{\chi} \chi$,  for some $a_{\chi} \in \CCC$ $\Leftrightarrow$ $f = \sum_{\chi \in \QQ} a_{\chi} (\sum_{i=1}^m \chi(K_i) \d_{K_i})$ $\Leftrightarrow$ $f = \sum_{i=1}^m (\sum_{\chi \in \QQ} a_{\chi} \chi(K_i)) \d_{K_i}$. Note that $a_{\chi}$ does not rely on $i$. 

Since each $Cg_j$ is a total perfect code in $\Ga$, by Theorem \ref{thm:equ} and the computation above, for $1 \le i \le m$ and $1 \le j \le s+1$, there exist $a_{\chi, j} \in \CCC$ and $\chi \in \QQ$ such that
$$
|K_i \cap Cg_j| = \frac{|K_i|}{d} + \left(\sum_{\chi \in \QQ} a_{\chi, j} \chi(K_i)\right) |K_i|. 
$$
In other words, 
$$
(|K_i \cap Cg_j|)_{1 \le i \le m, 1 \le j \le s+1} = \frac{1}{d} \bmat{|K_1| & \cdots & |K_1|\\ \vdots &  & \vdots \\ |K_m| & \cdots & |K_m|} + (|K_i| \chi(K_i))_{1 \le i \le m, \chi \in \QQ} \cdot (a_{\chi, j})_{\chi \in \QQ, 1 \le j \le s+1}.
$$
Since as shown above the matrix on the left-hand side has rank $s+1$ and the first term on the right-hand side has rank one, the rank of the product $(|K_i| \chi(K_i)) \cdot (a_{\chi, j})$ is at least $s$. Thus the rank of $(|K_i| \chi(K_i))_{1 \le i \le m, \chi \in \QQ}$ is at least $s$. Consequently, $|\QQ| \ge s$ as required in (b). 
\qed
\end{proof}

\begin{example}
\label{ex:circulant}
{\em 
The characters of a cyclic group $C_n = \la a \ra$ of order $n$ are $\chi_{k}$, $0 \le k \le n-1$, defined by $\chi_{k}(a^j) = \omega^{kj}$, where $\omega = e^{2\pi i/n}$. By Corollary \ref{cor:nec-a}, a circulant $\Cay(C_n, S)$ (where $S^{-1} = S \subseteq C_n \setminus \{1\}$) admits a total perfect code only if there are at least $|S|$ integers $k$ between $0$ and $n-1$ such that $\sum_{j: a^{j} \in S} \omega^{kj} = 0$. 
}
\qed
\end{example}

\subsection{Another necessary condition}
\label{subsec:b}

Let $\Ga = \Cay(G, S)$ be the Cayley graph in Theorem \ref{thm:nec-a} and $H$ a subgroup of $G$. Denote by 
$$
\pi(H) = \{x_1 H, \ldots, x_m H\} 
$$
the partition of $G$ into left cosets of $H$ in $G$, where $m = |G:H|$ and $x_1, \ldots, x_m$ are a set of representatives of such cosets. Since these cosets are the $H$-orbits under the action of $H$ on $G$ by right multiplication, and $H$ can be viewed as a subgroup of $\Aut(\Ga)$, it follows that $\pi(H)$ is an equitable partition of $\Ga$. The `standard' basis of $\CCC G_{\pi(H)}$ is $\{\d_{x_1 H}, \ldots, \d_{x_m H}\}$. Let $\l_G^H: G \rightarrow \Sym(\CCC G_{\pi(H)})$ be the representation of $G$ with degree $|G:H|$ defined by $\l_G^H(g)(\d_{x_i H}) = \d_{gx_i H}$ for $g \in G$ and $1 \le i \le m$. The following result is analogous to \cite[Theorem 7]{E87}. 

\begin{theorem}
\label{thm:nec-b}
Let $G$ be a group and $S$ a union of conjugacy classes of $G$ with $1 \not \in S$ and $S^{-1} = S$. Suppose $H$ is a subgroup of $G$ such that $\sum_{K} \frac{|K|\chi(K)}{d_{\chi}} \ne 0$ for every irreducible character $\chi$ of $G$ occurring in the decomposition of $\l_G^H$ (into a direct sum of irreducible characters), where $K$ runs over all conjugacy classes contained in $S$. Then every total perfect code in $\Cay(G, S)$ intersects every left coset of $H$ in $G$ at exactly $|H|/|S|$ elements. In particular, if $|S|$ does not divide $|H|$, then $\Cay(G, S)$ has no total perfect code. 
\end{theorem}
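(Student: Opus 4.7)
The plan is to apply Theorem~\ref{thm:equ} to the equitable partition $\pi(H) = \{x_1H, \ldots, x_mH\}$ of $\Ga = \Cay(G,S)$. With $d = |S|$ and $|x_iH| = |H|$, this produces rationals $k_1,\ldots,k_m$ satisfying $|x_iH \cap C| = (1/d + k_i)|H|$ together with $A_{\pi(H)}(k_1,\ldots,k_m)^T = 0$. If one can show that the matrix $A_{\pi(H)}$ is invertible, then $(k_1,\ldots,k_m)^T = 0$, which forces $|x_iH \cap C| = |H|/|S|$ for each $i$; the second assertion of the theorem is then immediate, since this count must be a nonnegative integer.

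The main step is therefore to verify that the linear map $\vp_{\pi(H)}$ (whose matrix is $A_{\pi(H)}$) is invertible. First I would observe that the subspace $\CCC G_{\pi(H)}$ spanned by $\d_{x_1H},\ldots,\d_{x_mH}$ is invariant under $\l_G$: since $\l_G(g)\d_{x_iH} = \d_{gx_iH}$, the action simply permutes these basis vectors, and the resulting subrepresentation is precisely $\l_G^H$. Since Lemma~\ref{lem:vphi} holds for an arbitrary connection set, I can then rewrite $\vp_{\pi(H)} = \sum_{g\in S}\l_G^H(g)$ as a linear operator on $\CCC G_{\pi(H)}$.

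Next I would decompose $\l_G^H$ into irreducible components of $G$. Because $S$ is a union of conjugacy classes, the element $\sum_{g\in S} g$ lies in the centre of the group algebra $\CCC[G]$; Schur's lemma then implies that on each irreducible component with character $\chi$, the operator $\sum_{g\in S}\rho_\chi(g)$ acts as the scalar $\lambda_\chi = \sum_K |K|\chi(K)/d_\chi$, where $K$ ranges over the conjugacy classes of $G$ contained in $S$ (the scalar being read off by taking traces). This is essentially the computation behind (\ref{eq:dec}) in the proof of Theorem~\ref{thm:nec-a}. Consequently the eigenvalues of $\vp_{\pi(H)}$ are precisely the numbers $\lambda_\chi$ as $\chi$ ranges over the irreducible characters of $G$ occurring in the decomposition of $\l_G^H$, and by hypothesis every such $\lambda_\chi$ is nonzero. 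This yields the invertibility of $\vp_{\pi(H)}$ and completes the argument.

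The main obstacle is a bookkeeping one: one must carefully restrict attention to the irreducible constituents of $G$ that actually appear in $\l_G^H$ (rather than in the full regular representation $\l_G$ as in Theorem~\ref{thm:nec-a}) when enumerating the eigenvalues of $\vp_{\pi(H)}$, and justify that the centrality of $\sum_{g \in S} g$ in $\CCC[G]$ is what collapses each such constituent to a single scalar eigenvalue of $\vp_{\pi(H)}$. Once this dictionary between characters occurring in $\l_G^H$ and eigenvalues of $\vp_{\pi(H)}$ is in place, the rest of the argument is a direct parallel to the proof of Theorem~\ref{thm:nec-a}.
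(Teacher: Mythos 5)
Your proposal is correct and follows essentially the same route as the paper: both apply Theorem~\ref{thm:equ} to the equitable partition $\pi(H)$, identify $\vp_{\pi(H)}$ with $\sum_{g\in S}\l_G^H(g)$, and use the central-element/Schur's lemma decomposition of $\l_G^H$ into irreducibles to conclude that the hypothesis forces $A_{\pi(H)}$ to be invertible, hence $k_i=0$ and $|x_iH\cap C|=|H|/|S|$. The ``bookkeeping'' step you flag is exactly what the paper records as equation~(\ref{eq:dec-b}).
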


\begin{proof}
As in \textsection \ref{subsec:a}, let $S = \cup_{i=1}^s K_i$ and let $\vp: \CCC G \rightarrow \CCC G$ be the linear mapping defined in (\ref{eq:vphi}) for $V=G$ and $\Ga = \Cay(G, S)$. Similar to Lemma \ref{lem:vphi}, a straightforward computation yields \cite{E87}   
$$
\vp_{\pi(H)} = \sum_{g \in S} \l_{G}^{H}(g).
$$
(That is, the  matrix of $\vp_{\pi(H)}$ is given by $A_{\pi(H)} = \sum_{g \in S} \l_{G}^{H}(g)$ when $\l_{G}^{H}(g)$ is interpreted as the permutation matrix of the permutation $\d_{x_i H} \mapsto \d_{gx_i H}$, $i=1, \ldots, m$ of the basis $\{\d_{x_1 H}, \ldots, \d_{x_m H}\}$ of $\CCC G_{\pi(H)}$.) Based on this and similar to (\ref{eq:dec}), one can verify that
\be
\label{eq:dec-b}
\vp_{\pi(H)} = \bigoplus_{W}\left(\sum_{i=1}^s \frac{|K_i|}{d_{\chi_W}}\chi_W(K_i)\right) \mathrm{Id}_W,
\ee
where the direct sum runs over all irreducible representations $(\rho_W, W)$ in the decomposition of $\l_G^H$ (into a direct sum of irreducible representations). Thus $\det(A_{\pi(H)}) \ne 0$ if and only if $\sum_{i=1}^s \frac{|K_i|}{d_{\chi_W}}\chi_W(K_i) \ne 0$ for every $W$ in (\ref{eq:dec-b}). In this case, by (\ref{eq:eigen})-(\ref{eq:eigen1}) applied to $\pi(H)$, for any total perfect code $C$ in $\Ga$ we have $|x_i H \cap C| = |H|/|S|$, which occurs only when $|S|$ is a divisor of $|H|$. 
\qed
\end{proof}

\medskip

\textbf{Acknowledgements}~~The author is grateful to three anonymous referees for their helpful comments which led to improvement of Theorem \ref{thm:nec-a}(b) and betterment of presentation of the paper. 
This research was supported by the Australian Research Council (FT110100629).

{\small

}

\end{document}